%\title{Fast Assortments ARXIV 2018}
\documentclass[english,12pt, letter]{article}
% Natbib setup for author-year style
\usepackage{natbib}
 \bibpunct[, ]{(}{)}{,}{a}{}{,}%

\usepackage{amssymb,amsmath,amsthm,graphicx} %PLAIN Apr 2018

\usepackage{comment,amsfonts,mathtools,bm,booktabs,lmodern, microtype,xcolor,algorithm}
\usepackage{enumerate}
\graphicspath{ {./} {./figures/} }
\usepackage[noend]{algorithmic}
\usepackage{hyperref}
\hypersetup{colorlinks=true,citecolor=red}
\usepackage{geometry}
\geometry{verbose,tmargin=1in,bmargin=1in,lmargin=1in,rmargin=1in} %PLAIN Apr 2018
\theoremstyle{definition}
\newtheorem{definition}{Definition}[section]
\newtheorem{remark}{Remark}[section]

\newtheorem{theorem}{Theorem}[section]
\newtheorem{lemma}{Lemma}[section]

\newtheorem{proposition}{Proposition}[section]
\DeclareMathOperator*{\argmax}{\arg\max}
\DeclareMathOperator*{\argmin}{\arg\min}

\geometry{verbose,tmargin=1in,bmargin=1in,lmargin=1in,rmargin=1in}

\def\noofprod{n}
\def\noofset{N}
\newcommand{\alsh}{\textsc{Assort-MNL(BZ)}}
\newcommand{\aheu}{\textsc{Assort-MNL(Approx)}} 
\newcommand{\ann}{\textsc{Assort-MNL}}
\DeclareMathOperator*{\apargmax}{\mathit{approx \ arg\,max}}

\newcommand{\cS}{\mathcal{S}}
\newcommand{\bv}{\textbf{v}}
\newcommand{\bz}{\textbf{z}}
\newcommand{\bp}{\textbf{p}}
\newcommand{\bu}{\textbf{u}}
\newcommand{\ba}{\textbf{a}}

\begin{document}
\title{Optimizing Revenue over Data-driven Assortments}

\author{Deeksha Sinha \\ Operations Research Center \\ Massachusetts Institute of Technology \\ email: \url{deeksha@mit.edu} \and Theja Tulabandhula \\ Information and Decision Sciences \\ University of Illinois at Chicago  \\ email: \url{tt@theja.org} }

\date{April 30, 2018}

\maketitle

\begin{abstract}
We revisit the problem of large-scale assortment optimization under the multinomial logit choice model without any assumptions on the structure of the feasible assortments. 
Scalable real-time assortment optimization has become essential in e-commerce operations due to the need for personalization and the availability of a large variety of items. While this can be done when there are simplistic assortment choices to be made, not imposing any constraints on the collection of feasible assortments gives more flexibility to incorporate insights of store-managers and historically well-performing assortments.
We design fast and flexible algorithms based on variations of binary search that find the revenue of the (approximately) optimal assortment.
We speed up the comparisons steps using novel vector space embeddings, based on advances in the information retrieval literature. 
For an arbitrary collection of assortments, our algorithms can find a solution in time that is sub-linear in the number of assortments and for the simpler case of cardinality constraints - linear in the number of items (existing methods are quadratic or worse). Empirical validations using the Billion Prices dataset and several retail transaction datasets show that our algorithms are competitive even when the number of items is $\sim 10^5$ ($100$x larger instances than previously studied).
\end{abstract}

\section{Introduction}
\label{sec:introduction}

%what is the problem
Assortment optimization~\citep{kok2008assortment} is the problem of showing an appropriate subset (assortment) of items to a buyer taking into account their purchase (choice) behavior, and is a key problem studied in the revenue management literature. There are essentially two aspects to this problem: (a) the purchase behavior of the buyer, and (b) the metric that the seller wishes to optimize. Intuitively, the subset shown to the buyer impacts their purchase behavior, which in turn impacts the seller's desired metric such as conversion or revenue. Both the offline~\citep{Vel} and online~\citep{shipra} optimization settings have a wide variety of applications in retail, airline, hotel and transportation industries among others. Many variants of the problem have been extensively studied (hence, we only cite a few representative works) and is an integral part of multiple commercial offerings (for instance, see IBM Assortment Optimization~\citep{IBM} and JDA Category Management~\citep{JDA}).

%desired properties
The algorithms used for assortment optimization have several desired characteristics. These algorithms need to be \emph{computationally efficient} as well as \emph{data-driven} enabling real-time personalized optimization at scale. A significant motivation for focusing on the \emph{computational efficiency} of assortment optimization is the following: choice behavior varies across buyers as well as across time for the same buyer, and thus showing a single optimal assortment is not ideal. In fact, if the seller assumes a single static purchase behavior model for her target population and does not personalize, she may drastically lose out on many potential sales (especially of items not in the single assortment being displayed). As new information about the buyer (for instance, through click logs or some other form of feedback) updates their choice model, a real-time optimization of the most suitable assortment given the current model of the choice behavior is highly desirable. In particular, such an optimization scheme has to be scalable and extremely efficient to make personalization practical (e.g, see ~\cite{schurman2009user} for a study on the impact of delay on users in web search). 

%first property: scale
To further motivate scalability, consider how global e-commerce firms like Flipkart or Amazon or Taobao/TMall display items. Every aspect of the page displayed to a customer that visits their pages is broken down into modular pieces with different teams (often as large as hundreds of employees) responsible for delivering and managing the functionality (see solutions such as ~\cite{diaz2012real,abbar2013real,shmueli2009best} and~\cite{wang2010ranking} addressing such real-time issues). To be specific, the assortment team may have a budget of at most hundreds of milliseconds to display the best possible assortment given the current profile of the customer. The delivery team may also have a similar budget to compute the feasibility of same-day delivery of the current items in the cart to the customer's location in specific time windows. Given such time budgets, it is imperative that teams work on highly scalable algorithms to deliver the best experience and maximize their goals (e.g., revenue in the case of assortment optimization, cost in the case of delivery optimization). In practice the current state of the art for assortment planning tends to be based on memorization/look-up-table like setups where a list of globally popular items are, more often than not, greedily bunched into an assortment to be displayed to the customer. To further illustrate, we also mention another prototype application where such real-time computations are highly desired. In map products such as Google Maps or Apple Maps, there is a feature showing the best local restaurants and coffee-shops in the neighborhood of the customer's location. Given that there would typically be many such businesses in a neighborhood and a limited screen space, tailoring the assortment of businesses to show according the customer's propensity to click is desired, and doing this in real-time can also be extremely challenging. 

%second property: why general specification matters
Another desired characteristic of assortment optimization algorithms is their ability to perform well with different types of constraints on the set of feasible assortments. One of the most commonly encountered and well studied constraint is that the assortment size is bounded. Nonetheless, in many situations, the set of feasible assortments can have a more general description than this. For instance, frequent itemsets~\citep{borgelt2012frequent} discovered using transaction logs can readily give us candidate \emph{data-driven} assortments, that can lead to computationally difficult assortment optimization problem instances. There is a rich history of frequent itemset mining, both in research and in practice for retail data. Although, no previous work has connected them to assortment planning, we believe they are a natural fit and aligned with the seller's objective of maximizing expected revenue. This is because frequent itemsets zero in on bundles of items that have been most often purchased together by a customer. This means that these assortments of items are naturally `best selling'. But often these sets are very large in number and all of them cannot be displayed to the customers due to various infrastructural or other business related constraints. As a result, we would prefer choosing one (or more) of them that have the potential to yield high expected revenue (assuming a probabilistic model of purchase behavior given these assortments such as the MNL model). This selection of one assortment among these data-driven feasible assortments turns out to be quite difficult because their collection cannot be easily represented using a compact (integer) polytope. This latter property is key for efficient algorithms, as seen in previous works. For instance, if assortment planning is carried over the polytope of all itemsets whose size is bounded, then under the MNL choice model, efficient algorithms proposed in~\cite{capMNL,jagabathula2014assortment} can be used (generalizations to unimodular polytopes have also been studied). In reality, we would like to specify data-driven itemsets, such as above, as candidate assortments in our optimization problem. Store managers can also curate arbitrary assortments based on domain knowledge and other business constraints. Unfortunately, no existing algorithms (including methods for integer programming) work well when these sets are not compactly represented.  By compact, we mean a polynomial-sized description of the collection of feasible assortments; for instance, a polytope in $\noofprod$ dimension ($\noofprod$ is the number of items) with at most a polynomial number of facets. 

%Why existing methods are not sufficient and where do our methods come in
While no general methods other than those for integer programming exist when the feasibility constraints are not unimodular, the special case of capacity constrained setting (where we consider all assortments with size less than a threshold) has been well studied algorithmically. The key approaches in this special case are: (a) a linear programming (LP) based approach by ~\cite{davis2013assortment} (time complexity O($\noofprod^{3.5}$) if based on interior point methods), (b) the \textsc{Static-MNL} algorithm  (time complexity O($\noofprod^2\log\noofprod$)) by ~\cite{capMNL}, and (c) ADXOpt (time complexity $O(n^2bC)$, where $n$ is the number of items, $C$ is the maximum size of the assortment and $b = \min \{C, n-C+1 \}$ for the MNL choice model) by ~\cite{jagabathula2014assortment}. Note that ADXOpt is a local search search heuristic designed to work with many different choice models, but is not guaranteed to find the optimal when a general collection of feasible assortments are considered under the MNL model. Offline methods that work with other purchase behavior models include a mixed integer programming approach in ~\cite{Vel} (NP-hard) for the distribution over ranking model, ~\cite{davis2014assortment} for the nested logit, ~\cite{bront2009column,rusmevichientong2014assortment} for the mixture of MNLs model, and ~\cite{desir2015capacity} for the Markov chain model to name a few. Capacity and other business constraints have also been considered for some of these  approaches (see~\cite{davis2013assortment}). It is not clear how to extend these methods when the feasible assortments cannot be compactly represented. Finally, note that although one could look at continuous estimation and optimization of assortments in an online learning framework~\citep{shipra} that can potentially scale and be data-driven, decoupling these two operations brings in a lot of flexibility for both steps, especially because the optimization approaches can be much better tuned for performance. Even when one is interested in an online learning scheme, efficient algorithms, such as the ones we propose, can easily be used as subroutines to regret minimizing online assortment optimization schemes.

%Our methods
In this work, we focus on a popular parametric purchase behavior model called the Multinomial Logit (MNL) model, and assume that the seller's objective is to maximize expected revenue by choosing the right assortment (prices are assumed known and fixed). We design algorithms to perform assortment optimization that are \emph{computationally efficient} and \emph{data-driven/flexible} with respect to the candidate assortments. These algorithms, namely \ann,  \aheu \ and \alsh, build on (noisy) binary search and make use of efficient data structures for similarity search, both of which have not been used for assortment optimization before. The consequence of these choices is that we can solve problem instances with \emph{extremely general specification of candidate assortments} in a \emph{highly scalable manner}. By leveraging recent advances in similarity search, a subfield of data mining, our algorithms can solve fairly large instances ($\sim 10^5$ items) within reasonable computation times even when the collection of feasible assortments have no compact representations. This allows store managers to seamlessly add or delete assortments, while still being able to optimize for the best assortments to show to the customers. In the capacity-constrained assortment setting, not only do our methods become more time (\textit{linear} in the number of items) and memory efficient in theory, they are also better empirically as shown in our experiments. Even in the setting with general assortments, our algorithms compute (nearly) optimal assortments given collections of assortments as large as $\sim 10^4$ within 2 seconds on average. These experiments are carried out using prices from the Billion Prices dataset~\citep{IAH6Z6_2016} as well as using frequent itemsets mined from transaction logs~\citep{borgelt2012frequent}. 

%Summary
In summary, our work addresses the gap of practical assortment planning at Internet scale (where we ideally seek solutions in 10s-100s of milliseconds) and is complementary to works such as~\cite{Vel}, which focus on richer choice models (e.g., the distribution over rankings model, Markov chain model etc.), that lead to challenging integer programming problems. These latter solutions are also limited to instances where the sets can be efficiently described by a polytope. Fixing the choice model to be the MNL model allows our algorithms to scale to practical instances, especially those arising in the Internet retail/e-commerce settings as described above. The rest of the paper is organized as follows. In Section~\ref{sec:preliminaries}, we describe some preliminary concepts. Our proposed algorithms are in Section~\ref{sec:optimize}, whose performance we empirically validate in Section~\ref{sec:experiments}. Finally, Section~\ref{sec:conclude} presents some concluding remarks and avenues for future work.

\section{Preliminaries}
\label{sec:preliminaries}
\vspace{2mm}
\subsection{Assortment Planning}

The assortment planning problem concerns with choosing the assortment among a set of feasible assortments that maximizes the expected revenue (note that we use price and revenue interchangeably throughout the paper). Without loss of generality, let the items be indexed from $1$ to $\noofprod$ in the decreasing order of their prices, i.e., $p_1 \geq p_2 \geq \cdots p_{\noofprod}$. Let $f(A) = \sum_{l \in A} p_l \mathbb{P}(l|A)$ denote the revenue of the assortment $A \subseteq \{1,...,\noofprod\}$. Here $\mathbb{P}(l|A)$ represents the probability that a user selects item $l$ when assortment $A$ is shown to them and is governed by a choice model (we briefly remark about optimizing over multi-item choice models in Section~\ref{sec:conclude}). The expected revenue assortment optimization problem is: $\max_{A} f(A)$. 

Typically, in addition to choice models, the compactness of representation of feasible assortments also influences the computational tractability of the problem. In Section~\ref{sec:optimize}, we develop efficient algorithms for assortment optimization that do not require any structural assumption on the feasible sets, nor do they require a compact representation. The algorithms only require a set $\mathcal{S}$ of all the feasible assortments as input. Furthermore, when some structure is present, for instance, when we want to optimize over all assortments of a given size range (i.e. constraints of the type $c \leq |A| \leq C$), we can improve the time and space complexity of our algorithms significantly, matching existing solutions in the literature.

\subsection{The Multinomial Logit Model}

There are many choice models that have been proposed in the literature~\citep{train2009discrete} including Multinomial Logit (MNL), the mixture of MNLs (MMNL) model, the nested logit model, multinomial probit, the distribution over rankings model~\citep{farias2013nonparametric}, exponomial ~\citep{alptekinouglu2016exponomial} choice model and the Markov chain model~\cite{desir2015capacity}. The MNL model is arguably one of the most commonly used choice model, due to its simplicity. Though some of the above choice models are more expressive than the MNL choice model, they are harder to estimate reliably from data as well as make the corresponding optimization problems NP-hard. Thus, in our work we consider the MNL model to be the choice model.

Let the MNL choice model~\citep{luce1960individual} parameters be represented by a vector $\mathbf{v} = \left(v_0, v_1, \cdots v_{\noofprod}\right)$ with $0 \leq v_i \leq 1 \;\;\forall i$. Parameter $v_i, \ 1\leq i \leq \noofprod$, captures the preference of the user for purchasing item $i$. To be precise, $\log(v_i)$ is the  mean utility derived from item $i$ (MNL is an instance of the random utility maximization framework where utilities are random variables that are Gumbel distributed). Similarly, parameter $v_0$ captures the preference for not making any purchase or selection. For this model, it can be shown that $\mathbb{P}(l|A) = \frac{v_l}{v_0 + \sum_{l' \in A} v_{l'}}$. Intuitively, the probability with which a buyer will pick an item increases when the item is shown with other items with lower mean utilities, and vice versa. Owing to its structure, the MNL model leads to tractable assortment optimization problems when the feasible sets can be represented compactly. For instance, in~\citet[Section 5, Table 11]{Vel}, the authors show that capacity-constrained assortment optimization under the MNL model can be solved fairly quickly (more than $5$ times faster than a proposed integer programming formulation), with relatively small gap from the true optimal revenue (a Mixed Multinomial Logit model was used as the ground truth). Coupled with the fact that estimating MNL parameters is relatively easy, this evens out some of its shortcomings such as: (a) under-fitting the data, and (b) satisfying the independence of irrelevant alternatives property.

\subsection{Locality Sensitive Hashing (LSH)}\label{subsec:LSH}

LSH~\citep{andoni2008near,har2012approximate} is a technique for finding vectors from a known set of vectors (referred to as the search space), that are  `similar' (i.e, \emph{neighbors} according to some metric) to  a given query vector in an efficient manner. It uses hash functions that produce similar values for input points (vectors) which are  similar to each other as compared to points which are not.  

We first formally define the closely related (but distinct) problems of finding the \textit{near neighbor} and the \textit{nearest neighbor} of a point from a set of points denoted by $P$ (with $|P| = N$).

\begin{definition} The $(c,r)$-NN (approximate \emph{near} neighbor) problem with failure probability $f \in (0,1)$ is to construct a data structure over a set of points $P$ that supports the following query:  given point $q$, if $\min_{p \in P}d(q,p) \leq r$, then report some point $p' \in P\cap \{p: d(p,q) \leq cr\}$ with probability $1-f$. Here, $d(q,p)$ represents the distance between points $q$ and $p$ according to a metric that captures the notion of neighbors. Similarly, the $c$-NN (approximate \emph{nearest} neighbor) problem with failure probability $f \in (0,1)$ is to construct a data structure over a set of points $P$ that supports the following query: given point $q$, report a $c$-approximate nearest neighbor of $q$ in $P$ (i.e., return $p'$ such that $d(p',q) \leq c\min_{p \in P}d(p,q)$) with probability $1-f$.
\end{definition}

As mentioned earlier, the data structures that can solve the above search problems efficiently are based on hash functions, which are typically endowed with the following properties:

\begin{definition}\label{def:hashfunctions}
A $(r,cr,P_1,P_2)$-sensitive family of hash functions ($h \in \mathcal{H}$) for a metric space $(X,d)$ satisfies the following properties for any two points $p,q \in X$:
\begin{itemize}
\item If $d(p,q) \leq r$, then $Pr_{\mathcal{H}}[h(q) = h(p)] \geq P_1$, and
\item If $d(p,q) \geq cr$, then $Pr_{\mathcal{H}}[h(q) = h(p)] \leq P_2$.
\end{itemize}
\end{definition}

In other words, if we choose a hash function uniformly from $\mathcal{H}$, then they will evaluate to the same value for a pair of points $p$ and $q$ with high ($P_1$) or low ($P_2$) probability based on how similar they are to each other. The following theorem states that we can construct a data structure that solves the approximate near neighbor problem in sub-linear time.

\begin{theorem} [\cite{har2012approximate} Theorem 3.4] Given a $(r,cr,P_1,P_2)$-sensitive family of hash functions, there exists a data structure for the $(c,r)$-NN (approximate near neighbor problem) over points in the set $P$ (with $|P| = N$) such that the time complexity of returning a result is $O(nN^\rho/P_1 \log_{1/P_2}N)$ and the space complexity is $O(nN^{1+\rho}/P_1)$. Here $\rho = \frac{\log 1/P_1}{\log 1/P_2}$. Further, the failure probability is upper bounded by $1/3 + 1/e$.\label{thm:near-neighbor}
\end{theorem}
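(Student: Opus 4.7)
The plan is to follow the classical two-stage amplification argument of Indyk--Motwani that underlies this theorem. The given family only guarantees a small gap between $P_1$ and $P_2$, so I would first sharpen the gap by concatenation and then drive up the overall success probability by using many independent hash tables, finally invoking Markov's inequality to cap the total query work.

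For the first stage, I would draw $k$ independent functions $h_1,\ldots,h_k$ from $\mathcal{H}$ and form the concatenated hash $g(\cdot)=(h_1(\cdot),\ldots,h_k(\cdot))$. By independence, for points with $d(p,q)\le r$ the collision probability under $g$ is at least $P_1^k$, while for points with $d(p,q)\ge cr$ it is at most $P_2^k$. Choosing $k=\lceil\log_{1/P_2}N\rceil$ makes $P_2^k\le 1/N$, so the expected number of ``far'' points that collide with $q$ under a single $g$ is at most $1$. With this choice, $P_1^k$ becomes $P_1^{\log_{1/P_2}N}=N^{-\rho}$ with $\rho=\log(1/P_1)/\log(1/P_2)$, which is exactly the exponent appearing in the theorem.

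For the second stage, I would build $L=\lceil N^{\rho}/P_1\rceil$ independent hash tables with functions $g_1,\ldots,g_L$ and, for each table, insert every $p\in P$ into the bucket $g_i(p)$. On query $q$, I retrieve the contents of $g_i(q)$ from each table, and stop after examining at most $3L$ candidate points, checking $d(q,p)\le cr$ for each. The completeness analysis: if there is some $p^*$ with $d(q,p^*)\le r$, then the probability that $p^*$ fails to collide with $q$ in any single table is at most $1-P_1^k\le 1-1/L$, so the probability of missing it across all $L$ tables is at most $(1-1/L)^L\le 1/e$. The soundness/efficiency analysis: by the first-stage bound the expected number of far-point collisions summed across all tables is at most $L$, so by Markov's inequality the probability that more than $3L$ candidates are inspected is at most $1/3$; conditioning on this, we terminate in bounded time, and the overall failure probability is at most $1/e+1/3$ as claimed.

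Collecting costs, each query evaluates $k$ hash functions in each of $L$ tables and then inspects at most $3L$ candidates, each check taking $O(n)$ time for a point in $n$ dimensions; this yields query time $O(nL\,k)=O\bigl(nN^{\rho}\log_{1/P_2}N / P_1\bigr)$. The space is dominated by the $L$ tables, each storing all $N$ points of dimension $n$, giving $O(nN^{1+\rho}/P_1)$. The only genuinely delicate step is calibrating $k$ and $L$ simultaneously: $k$ must be large enough to make $N P_2^k=O(1)$ (to control spurious collisions per table) yet small enough that $P_1^k$ is not so tiny as to force a prohibitive $L$; the choice $k=\log_{1/P_2}N$, $L\asymp N^\rho/P_1$ is the unique one that balances both constraints and produces the stated bounds, and this balancing is where I would expect to spend the most care in a fully rigorous write-up.
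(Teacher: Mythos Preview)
Your proposal is correct and is precisely the standard Indyk--Motwani two-stage amplification argument that underlies the cited result; the paper does not prove this theorem itself (it is quoted from \cite{har2012approximate}) but the informal description it gives of the data structure---concatenating $L_1=\log N$ hash functions into each $g_j$, building $L_2=N^\rho$ independent tables, and stopping after $L_3=3L_2$ candidates---matches your $k$, $L$, and $3L$ exactly. Your write-up is in fact more complete than the paper's sketch, since you make explicit the $(1-1/L)^L\le 1/e$ bound for missing the near point and the Markov argument yielding the $1/3$ term, which together justify the $1/3+1/e$ failure bound that the paper merely states.
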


\begin{remark} Note that the failure probability can be changed to meet to any desired upper bound by \emph{amplification}: here we query a constant number (say $\kappa$) of different copies of the data structure above and aggregate the results to output a single near-neighbor candidate. Such an output will fail to be a $(c,r)$-NN with probability upper bounded by $(1/3+1/e)^{\kappa}$. 
\end{remark}

While the hash family with the properties stated in Definition~\ref{def:hashfunctions} already gives us a potential solution for the $(c,r)$-NN problem, the following data structure allows for a finer control of the time complexity and the approximation guarantees (corresponding to Theorem~\ref{thm:near-neighbor}). In particular, as described in~\cite{andoni2008near}, we employ multiple hash functions to increase the confidence in reporting near neighbors by amplifying the gap between $P_1$ and $P_2$. The number of such hash functions is determined by suitably chosen parameters $L_1$ and $L_2$. In particular, we can choose $L_2$ functions of dimension $L_1$, denoted as $g_j(q) = ( h_{1,j}(q) , h_{2,j}(q), \cdots h_{L_1, j}(q) )$, where $h_{t,j}$ with $1 \leq t \leq L_1 ,1 \leq j \leq L_2$ are chosen independently and uniformly at random from the family of hash functions. The data structure for searching points with high similarity is constructed by taking each point $x$ in the search space and storing it in the location (bucket) indexed by  $g_j(x) , 1 \leq j \leq L_2$.  When a new query point $q$ is received,  $g_j(q) , 1 \leq j \leq L_2$ are  calculated and all the points from the search space in the buckets  $g_j(q) , 1 \leq j \leq L_2$ are retrieved.  We then compute the similarity of these points with  the query vector in a sequential manner and return any point that has a similarity greater than the specified threshold $r$. We also interrupt the search after finding the first $L_3$ points including duplicates for a suitably chosen value of $L_3$ (this is necessary for the guarantees in Theorem~\ref{thm:near-neighbor} to hold).

Given $P_1$, the probability that any point  $p$ such that  $d(q,p) \leq cr$ is retrieved is at least $ 1 - (1 - P_1^{L_1})^{L_2} $. Thus for any desired error probability $\delta > 0$, we can choose  $L_1$ and $L_2$ such that $p$ is returned by the data structure with probability at least $1 - \delta$. The time taken to perform a query, i.e., retrieve a point with the highest similarity can be as low as $O(nN^\rho)$, where $\rho = \frac{\ln (1/P_1)}{\ln (1/P_2)}$ is a number less than $1$, $N$ is the number of points in the search space and $n$ is the dimensionality of each point (see Theorem~\ref{thm:near-neighbor} above, also ~\cite{wang2014hashing} and~\cite{wang2016learning}). The key insight that allows for this sublinear dependence on the number of points is due to the appropriate choices of $L_1$, $L_2$ and $L_3$, which determine collisions of hashed (i.e., projected) transformations of the original points. In particular, choosing $L_1 = \log N$, $L_2 = N^\rho$ and $L_3 = 3L_2$ gives the desired computational performance (in addition to influencing the error probabilities). This sublinearity of retrieval time is what drives the computational efficiency of hashing based retrieval methods including our algorithms (in Section \ref{sec:optimize}).  For instance, Uber %(\href{https://eng.uber.com/lsh/}{Uber engineering blog}, retrieved on Feb 06 2018)
\citep{uberBlog} has used an LSH based similarity search technique for fraudulent trip detection for its app-based ride-hailing service, and their system is able to achieve detection results much faster using hashing (4 hours) compared to naive linear search (55 hours). Finally, note that the memory requirements of the data structure turns out to be $O(nN^{1+\rho})$ (as stated in Theorem~\ref{thm:near-neighbor}), which is not too expensive in many large scale settings (we need $O(nN)$ space just to store the points).

It turns out that out the problem we care about in this work is related to the \emph{nearest} neighbor problem. And the nearest neighbor problem has a close connection to the \emph{near} neighbor problem described above. In particular, the following theorem states that an approximate nearest neighbor data structure can be constructed using an approximate near neighbor subroutine without taking a large hit in the time complexity, the space complexity or the retrieval probability.

\begin{theorem} [\cite{har2012approximate} Theorem 2.9] Let P be a given set of N points in a metric space, and let $c, f \in (0,1)$ and $\gamma \in (\frac{1}{N},1)$ be parameters. Assume we have a data-structure for the $(c,r)$-NN (approximate \emph{near} neighbor) problem that uses space $S$ and has query time $Q$ and failure probability $f$. Then there exists a data structure for answering $c(1+O(\gamma))$-NN (approximate \emph{nearest} neighbor) problem queries in time $O(Q\log N)$ with failure probability $O(f\log N)$. The resulting data structure uses $O(S/\gamma \log^2 N)$ space.\label{thm:nearest-neighbor}
\end{theorem}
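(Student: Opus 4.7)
The plan is to reduce the approximate nearest-neighbor problem to a logarithmic number of approximate near-neighbor queries by building $(c, r_i)$-NN data structures at a geometric sequence of radii and binary searching for the smallest radius at which a valid point is returned.

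First I would fix a geometric sequence $r_i = r_{\min}(1+\gamma)^i$ for $i = 0, 1, \ldots, L-1$, chosen so that $[r_{\min}, r_{\max}]$ contains every possible nearest-neighbor distance $d^*(q) = \min_{p \in P} d(q,p)$. For each $r_i$, I would build a $(c, r_i)$-NN data structure using the hypothesized construction. Each near-neighbor query is augmented with a verification step: whenever the data structure returns a candidate $p'$, check whether $d(p', q) \leq c r_i$, and treat the query as failed otherwise; this turns the raw primitive (which is only specified when a near neighbor exists) into one that is either correct or reports failure. To answer a nearest-neighbor query, I would perform binary search over the indices $i$: if the data structure at $r_i$ returns a valid point, recurse on smaller $i$, else recurse on larger $i$. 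Letting $i^*$ be the smallest index that succeeds, we have $r_{i^*-1} < d^*(q) \leq r_{i^*}$, so the returned point lies within $c r_{i^*} \leq c(1+\gamma) d^*(q)$, giving a $c(1+\gamma)$-approximate nearest neighbor.

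The remaining quantities follow by accounting. Binary search performs $O(\log L)$ near-neighbor queries, giving overall query time $O(Q \log L)$ and overall failure probability $O(f \log L)$ by a union bound; the space used is $O(L S)$ since one $(c, r_i)$-NN structure is stored per level. The main obstacle is controlling $L$: a naive geometric cover of $[r_{\min}, r_{\max}]$ has $L = \Theta(\log_{1+\gamma}(r_{\max}/r_{\min})) = \Theta(\log(r_{\max}/r_{\min})/\gamma)$, which scales with the aspect ratio of $P$ and can be arbitrarily large in general metric spaces. To obtain bounds that depend only on $N$, I would preprocess $P$ to extract a polynomial-in-$N$ collection of candidate distance scales, for example via the edge lengths of a minimum spanning tree on $P$ or via a well-separated pair decomposition, and then refine each such scale by a factor $(1+\gamma)$ a logarithmic number of times. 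This caps the total number of levels at $L = O(\log^2 N / \gamma)$, after which binary search costs $O(\log L) = O(\log N)$ near-neighbor queries, the total space is $O(LS) = O(S/\gamma \cdot \log^2 N)$, and the overall failure probability is $O(f \log N)$, matching the statement of the theorem.
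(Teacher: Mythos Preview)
The paper does not actually prove this theorem: it is quoted directly from \cite{har2012approximate}, and the only supporting material is an informal illustration in the appendix (Section I of the electronic companion). That illustration builds several $(c,r)$-NN structures at increasing thresholds and performs a \emph{linear} scan from the smallest threshold upward until some structure returns a point; the paper explicitly flags that this sketch does not yield the precise bounds of the theorem and defers the details to the source.

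Your proposal is therefore considerably more complete than what the paper offers, and it is essentially the right argument. The two genuine ingredients you identify --- (i) binary search over a geometric ladder of radii rather than a linear scan, which is what produces the $O(Q\log N)$ query time and $O(f\log N)$ failure probability, and (ii) the observation that a naive ladder has length depending on the aspect ratio, so one must first extract $O(\operatorname{poly}(N))$ relevant scales from the point set before refining by factors of $(1+\gamma)$ --- are exactly the ideas behind the Har-Peled--Indyk--Motwani construction. Your accounting for the number of levels $L = O(\gamma^{-1}\log^2 N)$ and the resulting space $O(SL)$ matches the statement.

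One point that would need tightening in a full proof: the $(c,r)$-NN primitive, as stated, guarantees nothing when $\min_{p}d(q,p) > r$, so even with your verification step the outcome at a level where $d^*(q)\in(r_i,cr_i]$ is not a deterministic function of $i$. This makes the binary search non-monotone in a way that must be argued around (e.g.\ by showing that conditioned on no failures, there is still a well-defined threshold index, or by combining the binary search with a final linear check over $O(1)$ adjacent levels). This is handled in the source and is not a flaw in your overall strategy, but it is the place where the argument as written is slightly loose.
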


More intuition on how a collection of near neighbor data structures can be used to build a nearest neighbor data structure is given in the electronic companion. Finally, we remark about the properties of the hash families characterized by parameters $\rho$ and $c$. These two parameters are inversely related to each other. For instance, in \cite{andoni2008near}, the authors propose a family $\mathcal{H}$ for which $\rho(c) = \frac{1}{c^2} + O(\log\log N / \log^{1/3}N)$. For large enough $N$, and for say $c = 2$ (2 approximate near neighbor), $\rho(c) \approx .25$, and thus the query completion time of near neighbor (as well as nearest neighbor) queries is $\propto N^{.25}$ which is a significant computational saving.

\subsection{Maximum Inner Product Search (MIPS)}

The MIPS problem is that of finding the vector in a given set of vectors (points) which has the highest inner product with a query vector. Precisely, for a query vector $q$ and set of points $P$, the optimization problem is: $ \max_{x \in P} q  \cdot  x$ (the `$\cdot$' operation stands for inner product). When there is no structure on $P$, one can solve for the optimal via a linear scan, which can be quite slow for large problem instances. To get around this, we can also solve the instance approximately using methods based on Locality Sensitive Hashing (LSH) and variants~\citep{neyshabur2015symmetric}. See the electronic companion for one such reduction. Approximate methods have also been proposed for related problems such as the Jaccard Similarity (JS) search in the information retrieval literature. For instance, one can find a set maximizing Jaccard Similarity with a query set using a technique called \textsc{Minhash} \citep{broder1997resemblance}. The $c$-NN problem, as discussed above, can be addressed using \textsc{L2LSH}~\citep{datar2004locality} and many other solution approaches (see for instance~\cite{li17d} and references within). The MIPS problem can be solved approximately using methods such as \textsc{L2-ALSH(SL)}~\citep{shrivastava2014asymmetric} and \textsc{Simple-LSH}~\citep{neyshabur2015symmetric}.
MIPS will be a key part of all of our algorithms, namely \ann{}, \aheu{} and \alsh{}. Among them, \ann{} relies on a subroutine that solves the MIPS problem exactly.

Let the approximation guarantee for the nearest neighbor obtained be $1+\nu$ (i.e., set the parameters of the data structure in Theorem~\ref{thm:nearest-neighbor} such that it solves the $(1+ \nu)$-NN problem). Then the following straightforward relation holds.
\begin{lemma}\label{lemma:nn2mips}
If we have an $1+\nu$ solution $x$ to the nearest neighbor problem for vector $y$, then $1 + (1+\nu)^2(\max_{p \in P}p\cdot y - 1) \leq x \cdot y \leq \max_{p \in P}p\cdot y$.
\end{lemma}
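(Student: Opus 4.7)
The plan is to exploit the preprocessing transformation $T(\cdot)$ that was introduced in the paragraph preceding the lemma: for each database point $p$ with $\|p\|_2\le 1$, $T(p) = [p;\sqrt{1-\|p\|_2^2}] \in \real^{n+1}$ is a unit vector, and since the query $y$ is assumed (without loss of generality) to satisfy $\|y\|_2 = 1$, we have $T(y) = [y;0]$, also of unit norm. The key algebraic observation is then that for any $p\in P$,
\[
\|T(p) - T(y)\|_2^2 \;=\; \|T(p)\|_2^2 + \|T(y)\|_2^2 - 2\,T(p)\cdot T(y) \;=\; 2 - 2\,p\cdot y,
\]
because the last coordinate of $T(y)$ is zero, so $T(p)\cdot T(y) = p\cdot y$. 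Thus minimizing Euclidean distance to $T(y)$ over $\{T(p): p\in P\}$ is exactly equivalent to maximizing $p\cdot y$ over $P$, and the squared nearest-neighbor distance equals $2 - 2\max_{p\in P}p\cdot y$.

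With this reduction in hand, I would invoke the $(1+\nu)$-approximate nearest neighbor guarantee on the transformed points: the returned point $x$ satisfies
\[
\|T(x) - T(y)\|_2 \;\le\; (1+\nu)\,\min_{p\in P}\|T(p) - T(y)\|_2.
\]
Squaring both sides and substituting the identity above yields $2 - 2\,x\cdot y \le (1+\nu)^2(2 - 2\max_{p\in P}p\cdot y)$. Dividing through by $2$ and rearranging gives
\[
x\cdot y \;\ge\; 1 + (1+\nu)^2\bigl(\max_{p\in P} p\cdot y - 1\bigr),
\]
which is the lower bound of the lemma. The upper bound $x\cdot y \le \max_{p\in P}p\cdot y$ is immediate because $\max_{p\in P}p\cdot y$ is by definition the maximum over $P$, and $x\in P$.

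There is no real obstacle here: the entire argument is a short algebraic manipulation of the distance-to-inner-product identity, once the $T(\cdot)$ lift is in place. The only point that requires a sentence of care is verifying that both $T(x)$ and $T(y)$ are unit vectors so that $\|T(x)-T(y)\|_2^2$ cleanly reduces to $2 - 2\,x\cdot y$; this is precisely the design purpose of $T(\cdot)$ combined with the WLOG normalization $\|y\|_2=1$.
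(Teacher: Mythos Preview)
Your proof is correct and is precisely the argument the paper has in mind: the paper does not give an explicit proof of this lemma (it just calls the relation ``straightforward''), but the transformation $T(\cdot)$ you invoke is exactly the one described in the paper's appendix on solving MIPS via LSH, and your algebraic manipulation of $\|T(p)-T(y)\|_2^2 = 2 - 2\,p\cdot y$ together with the $(1+\nu)$-nearest-neighbor guarantee is the intended derivation.
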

The above lemma will be key in designing two of our algorithms, namely \aheu{} and \alsh, in the next section.
\section{New Algorithms for Assortment Planning}
\label{sec:optimize}

We propose three new algorithms: \ann, \aheu \  and \alsh . \ann \ aims to find an assortment with revenue that is within an additive tolerance $\epsilon$ of the optimal revenue. It turns out that a key computational step in \ann \ can be 
made faster using an LSH based subroutine. As a tradeoff, such a subroutine typically produces approximate results and has a failure probability associated with it. Our second algorithm \aheu \ addresses the approximation aspect, while the third algorithm \alsh \ addresses the probabilistic errors. Along the way, we also present an optimized version of \ann \ when the feasible set of assortments is given by capacity constraints.

\subsection{First Algorithm: \ann{}}

\begin{figure}

\noindent\begin{minipage}{\textwidth}
\hspace{\dimexpr-\fboxrule-\fboxsep\relax}\fbox{%
\begin{minipage}[t]{.48\textwidth}
\vspace{0pt}
\begin{algorithm}[H]
\caption{\ann{}}
\label{alg:ann_outline}
\begin{algorithmic}[1] 
\REQUIRE{ Prices $\{p_i\}_{i=1}^{n}$, model parameter \bv, tolerance parameter $\epsilon$, set of feasible assortments $\cS$ } \\
\STATE{$L_1 = 0, U_1 = p_1 , t=1, S^* = \{1 \} $}\\ 
\WHILE{ $U_j - L_j > \epsilon$} 
\STATE{ $K_j = (L_j + U_j)/2 $} \\
\IF{$K_j \leq  \max_{S \in \cS } f(S, \bv)$ \label{alg:compare-step}} 
\STATE{$L_{j+1} =  K, U_{j+1} = U_j$}
\STATE{Pick any $S^* \in \{ S:f(S, \bv) \geq K \}$}
\ELSE 
\STATE{$L_{j+1} = L_j , U_{j+1} = K$} 
\ENDIF 
\ENDWHILE
\RETURN{$ S^*$}
\vspace{29mm}
\end{algorithmic}
\end{algorithm}
\end{minipage}
}
%\hfill\vline\hfill
\hspace{\dimexpr-\fboxrule-\fboxsep\relax}\fbox{%
\begin{minipage}[t]{.48\textwidth}
\vspace{0pt}\raggedright
\begin{algorithm}[H]
\caption{\textsc{Compare step} in \ann{} }
\label{alg:ann_comp}
\begin{algorithmic}
\STATE{Given comparison: \vspace{-6mm} $$K \leq \max_{S \in \cS } \frac{1}{v_0} \ \sum_{i \in S}  v_i (p_i - K)$$}
\vspace{-10mm}
\STATE{Formulate an equivalent MIPS instance with:
\vspace{-4mm}
\begin{align*}
\bp &:= \left( p_1, p_2 \cdots p_{\noofprod} \right) \\
\mathbf{\hat{v}_K} &:= (v_1, \cdots, v_n, -v_1K,-v_2K, \cdots -v_{\noofprod}K) \\
\bu^S &:= (u_1, u_2, \cdots u_n) \ \text{ where } \  u_i = \mathbf{1} \{ i \in S\}\\
\mathbf{U} &:=  \{ \mathbf{u}^S | \ S \in \cS \}\\
\hat{\bz}^S &:= \left( \bp \circ \bu^S, \bu^S \right) \\
\mathbf{\widehat{Z}}& := \{ \hat{\mathbf{z}}^S : S \in \cS \}
\end{align*}
}\\
\STATE{Solve the MIPS instance: \\
$\quad\quad\quad \bz^{\tilde{S}} \in \argmax_{\hat{\bz}^S \in \mathbf{\widehat{Z}}   } \mathbf{\hat{v}_K} \cdot \mathbf{\hat{z}}$}
\STATE{Output result of an equivalent comparison: \\
$\quad\quad\quad  K \leq  \frac{\bv \cdot \bz^{\tilde{S}}}{v_0} $}
\end{algorithmic}
\end{algorithm}
\end{minipage}
}
%\hfill
%\end{figure*}
\end{minipage}

\end{figure}

%\vspace{2mm}

\ann \ (Algorithm~\ref{alg:ann_outline}) aims to find an $\epsilon$-optimal assortment i.e. an assortment with revenue within a small interval (defined by tolerance parameter $\epsilon$) of the optimal assortment's revenue.  In this algorithm, we search for the revenue maximizing assortment using the following iterative procedure. In each iteration, we maintain a search interval and check if there exists an assortment with revenue greater than the mid-point of the search interval. Then, we perform a binary search update of the search interval i.e. if there exists such an assortment then the lower bound of the search interval is increased to the mid-point (and this assortment is defined as the current optimal assortment). Otherwise, the upper bound is decreased to the mid-point. We continue iterating and narrowing down the search space until its length becomes less than the tolerance parameter $\epsilon$. Although it seems that the binary search loop is redundant if the comparison step is solving an assortment optimization problem, we will show soon that the comparison can be reformulated such that the overall time complexity of \ann \ is small. 

The search interval starts with the lower and upper bounds ($L_1$ and $U_1$) as $0$ and $p_1$ respectively, where $p_1$ is the  highest price among all items (note that $p_1$ is an upper bound on the revenue of any assortment.) 
Before the start of the algorithm, the optimal assortment is initialized to the set $\{ 1 \}$. This initial assortment will be returned as the optimal only when all the assortments have revenue less than $\epsilon$ and in that case this assortment is approximately optimal.

The strength of \ann \ is in its ability to efficiently answer a transformed version of the comparison $K \leq  \max_{S \in \cS } f(S, v)$, which is checking if there exists an assortment with revenue greater than the mid-point ($K$) of the current search interval. We refer to this step as the \textsc{Compare-Step} (see line~\ref{alg:compare-step} of Algorithm~\ref{alg:ann_outline}).
In particular, the original decision problem is transformed as follows.  In every iteration, we need to check if there exists a set $S \in \cS$  such that 
\begin{align*}
    K \leq \frac{\sum_{i \in S} p_i v_i}{v_0 + \sum_{i \in S} v_i} 
\Leftrightarrow &    K \leq \frac{1}{v_0} \ \sum_{i \in S}  v_i (p_i - K).
\end{align*}
This is equivalent to evaluating if $
 K \leq \max_{S \in \cS } \frac{1}{v_0} \ \sum_{i \in S}  v_i (p_i - K)$, allowing us to focus on the transformed optimization problem: 
 \vspace{-5mm}
\begin{equation}
\argmax_{S \in \cS}  \ \sum_{i \in S}  v_i (p_i - K).
\label{opt_prob}
\end{equation}
Now, we will describe how this optimization problem can be solved efficiently when: (a) there is a compact representation, for example, the capacitated setting, and (b) there is no compact representation for the set of feasible assortments.

%%%%%%%%%%%%%%%%%%%%%%%%%%% ALGORITHM SPECIALIZED
\subsubsection{Assortment Planning with Capacity Constraints} \label{subsec:capacity-constrained-modification}
We first start with a case where there is structure on the constraints defining the set of feasible assortments. In particular, consider the well studied capacity-constrained setting with $\cS = \{ S:  |S| \leq C  \}$, where constant $C$ specifies the maximum size of feasible assortments.  The key insight here is that the operation $ \argmax_{S \in \cS}  \ \sum_{i \in S}  v_i (p_i - K)$ can be decoupled into problems of smaller size, each of which can be solved efficiently. This is because the problem can be interpreted as that of finding a set of at most $C$ items that have the highest product $v_i(p_i - K)$ and the value of this product is positive. Thus, to solve this optimization problem we only need to calculate the value $v_i(p_i - K)$ for each item $i$ and sort these values. This strategy can also be extended to some other capacity-like constraints as described below:\\
\noindent(1) \textit{Lower bound on assortment size} - This constraint requires that all feasible assortments have at least $c \leq C$ items in addition to having at most $C$ items. In every iteration of Algorithm \ref{alg:ann_outline}, we find the top-$C$ items (by the  product $v_i(p_i - K)$)  that give a positive value of the inner product. This can be modified to finding the top-$C$ items and including at least the top-$c$ of them irrespective of the sign of the product. \\
\noindent(2) \textit{Capacity constraints on subsets of items} - To ensure diversity in the assortment, we may have constraints on how many items can be chosen from certain subsets of items. Suppose the items are partitioned into subsets $B_1, B_2, \cdots, B_w$ and we have capacity constraints $C_1, C_2, \cdots, C_w$ on each subset respectively, then the comparison to be solved in every iteration can be written as: 
	\begin{eqnarray*}
		K \leq   \max_{S: |S| \leq C_1, S \subseteq B_1} \frac{1}{v_0}   \sum_{i \in S}  v_i (p_i - K)  + \cdots +   \max_{S: |S| \leq C_w, S \subseteq B_w} \frac{1}{v_0}  \sum_{i \in S}  v_i (p_i - K). 
	\end{eqnarray*} 
One can solve these $w$ independent problems the same way as the previous setting.    
\subsubsection{Assortment Planning with General Constraints}
In the absence of structure in the set of feasible assortments, we cannot decouple the optimization problem in Equation (\ref{opt_prob}) as before. Nonetheless, this can be reduced to a MIPS problem as described in Algorithm \ref{alg:ann_comp}. Here, $\mathbf{1}\{ \cdot \}$ represents the indicator function and $\textbf{a} \circ \textbf{b}$ represents the Hadamard product between vectors $\textbf{a}$ and $\textbf{b}$. The number of points in the search space of the above MIPS problem is $N = |\cS|$. The key benefit of reducing the comparison to solving a MIPS instance is that MIPS instances can be solved very efficiently in practice~\citep{neyshabur2015symmetric}, either exactly or approximately. We discuss the time complexity of \ann \ in Section~\ref{subsec:time-complexity}.

%%%%%%%%%%%%%%%%%%%%%%%%%%% SECOND ALGORITHM
\subsection{ Second Algorithm: \aheu }\label{subsec:aheu}

We can extend \ann \ by allowing the use of any sub-routine which solves the MIPS problem. In particular, approaches which solve MIPS approximately and in a randomized manner can also be used to obtain approximately optimal solutions. Such approaches achieve runtime performance gains as they are typically faster while trading off accuracy (see Section \ref{sec:experiments}). 

We now give details regarding the use of an LSH based data structure for solving the MIPS problem approximately. This choice of technique is primarily due to LSH's superior performance in high dimensions and better worst-case runtime guarantees. With suitable modifications, Algorithms \ref{alg:aheu_eff_mod} and \ref{alg:alsh} proposed in the next two subsections can also work with any other subroutine that performs approximate MIPS (with probability of error bounded away from $1$). 

\noindent\textit{Approximation in \aheu{}}: Recall that to solve the  MIPS problem introduced in the \ann \ algorithm, we need the nearest point to $\bv$ in the set $\mathbf{Z}(K)$ according to the inner product metric. Now, instead of finding such a nearest point exactly, we can think of computing it approximately (i.e., it will be \emph{near enough}) as well as probabilistically (i.e., the returned point may not be near enough with some probability). Let us denote the above  operation of finding an approximate nearest neighbor as  $\apargmax$. More precisely, $\apargmax\;(q, P)$\  returns a vector from the set $P$.  This is the  approximate  nearest neighbor (i.e. has a high enough inner product among the points in set $P$) to the query vector $q$ with probability $1-\delta$, where $\delta$ is the probability of error. 

We will focus on the approximation aspect (near enough versus nearest) now and address the error probabilities in Section~\ref{subsec:alsh} (the algorithm \alsh{} in Section~\ref{subsec:alsh} will take errors into account and provide formal error guarantees on the revenue of the assortment returned). The proposed algorithm \aheu{} that addresses the approximation aspect is then similar to \ann \, except for two changes: (a) the operation $\bz^{\tilde{S}} = \argmax_{\bz^S \in \mathbf{Z}(K)   }  \bv \cdot \bz^S$ is replaced by $\bz^{\tilde{S}} = \apargmax_{\bz^S \in \mathbf{Z}(K)   }  \bv \cdot \bz^S$, and (b) the approximation in the quality of the solution returned is taken into account in each iteration. 
 
From Lemma~\ref{lemma:nn2mips}, note that if the LSH data-structure returns a Euclidean approximate nearest neighbor $x$ to a query $y$ such that $\|x-y\|_2 \leq (1+\nu)\|x^*-y\|_2$ (where $x^*$ is the nearest neighbor), then the vector $x$ satisfies the following inner product relation: $1 + (1+\nu)^2(x^*\cdot y - 1) \leq x \cdot y \leq x^*\cdot y$. Let the current threshold for comparison be $K$. Further, let $\hat{K} = 1 + (1+\nu)^2(K - 1)$.   Without loss of generality, let $p_1 \leq 1$, which implies that $\hat{K} \leq K$. %We need this assumption so that case 1 and 2 don't hold simultaneously. Nothing breaks if they do, just that we have will have to define how to handle such cases.
Because the returned solution $x$ is  only approximate, we can only assert that one of the following inequalities is true for any given comparison, giving us the corresponding update for the next iteration:
\begin{itemize}
\item If $\hat{K} \geq x \cdot y$, then $K \geq x^* \cdot y$, allowing update of the upper bound of the the search region  to $K$.
\item If $K \leq x \cdot y$, then $K \leq x^* \cdot y$, allowing update of the lower bound of the the search region  to $K$.
\item If $\hat{K} \leq x \cdot y$ and $K \geq x \cdot y$, then $\hat{K} \leq x^* \cdot y$, allowing update of the lower bound of the the search region  to $\hat{K}$.
%% The last case cannot occur when $\hat{K} \leq K$.
%\item If $\hat{K} \geq x \cdot y$ and $K \leq x \cdot y$, then $\hat{K} = x^* \cdot y$. In this case, we have found the nearest neighbor.
\end{itemize}
We give a proof of the update in the first setting in the electronic companion. The updates in the other two settings are straightforward. These conditions and the efficient transformation discussed above are summarized in Algorithm \ref{alg:aheu_eff_mod}.

%\vspace{2mm}

\begin{figure}
\noindent\begin{minipage}{\textwidth}
\hspace{\dimexpr-\fboxrule-\fboxsep\relax}\fbox{%
\begin{minipage}[t]{.48\textwidth}
\vspace{0pt}
\begin{algorithm}[H]
\caption{\aheu{}}
\label{alg:aheu_eff_mod}
\begin{algorithmic} 
\REQUIRE{ Prices $\{p_i\}_{i=1}^{n}$, tolerance parameter $\epsilon$, approximate nearest neighbor parameter $\nu$ } \\
\STATE{$L_1 = 0, \ U_1 = p_1 , \ t=1,\bp = (p_1, \cdots, p_n)$} 
\STATE{$\bu^S = (u_1, u_2, \cdots u_n) \ \text{ where } \  u_i = \mathbf{1} \{ i \in S\}$}
\STATE{$S^* = \{1 \}, \ \mathbf{\widehat{Z}} = \{ \mathbf{\hat{z}}^S | \hat{\bz}^S = \left( \bp \circ \bu^S, \bu^S \right), S \in \cS \}$}
\WHILE{ $U_j - L_j > \epsilon$} 
\STATE{ $K_j = \frac{L_j + U_j}{2}$} \\
\STATE{$\hat{K}_j = 1 + (1+\nu)^2(K_j - 1)$} \\
\STATE{$\mathbf{\hat{v}_{K_j}} = (v_1, \cdots, v_n,$\\$\quad\quad\quad -v_1K_j,-v_2K_j, \cdots -v_nK_j)$} \\
\STATE{$ \hat{\bz}^{\tilde{S}} = \apargmax \ (\mathbf{\hat{v}_{K_j}}, \mathbf{\hat{Z}})$}
\IF{$\hat{K}_j >  \frac{ \bv \cdot \hat{\bz}^{\tilde{S}}}{v_0}    $}  
\STATE{$L_{j+1} = L_j , U_{j+1} = K_j$} \\
\ELSIF{$K_j \leq  \frac{ \bv \cdot \hat{\bz}^{\tilde{S}}}{v_0}    $}  
\STATE{$L_{j+1} =  K_j, U_{j+1} = U_j$, $S^* = \tilde{S} $}
\ELSE
\STATE{$L_{j+1} = \hat{K}_j , U_{j+1} = U_j, S^* = \tilde{S} $}
\ENDIF 
\ENDWHILE
\RETURN{$ S^*$}
\end{algorithmic}
\end{algorithm}
\end{minipage}
}
\hspace{\dimexpr-\fboxrule-\fboxsep\relax}\fbox{%
\begin{minipage}[t]{.48\textwidth}
\vspace{0pt}
\begin{algorithm}[H]
\caption{\small{Simpler} \aheu{}}
\label{alg:aheu_simpler}
\begin{algorithmic} 
\REQUIRE{ Prices $\{p_i\}_{i=1}^{n}$, tolerance parameter $\epsilon$} \\
\STATE{$L_1 = 0, \ U_1 = p_1 , \ t=1,\bp = (p_1, \cdots, p_n)$} 
\STATE{$\bu^S = (u_1, u_2, \cdots u_n) \ \text{ where } \  u_i = \mathbf{1} \{ i \in S\}$}
\STATE{$S^* = \{1 \}, \ \mathbf{\widehat{Z}} = \{ \mathbf{\hat{z}}^S | \hat{\bz}^S = \left( \bp \circ \bu^S, \bu^S \right), S \in \cS \}$}
\WHILE{ $U_t - L_t > \epsilon$} 
\STATE{ $K = \frac{L_t + U_t}{2}$} \\
\STATE{$\mathbf{\hat{v}_K} = (v_1, \cdots, v_n, -v_1K,-v_2K, \cdots -v_nK)$} \\
\STATE{$ \hat{\bz}^{\tilde{S}} = \apargmax \ (\mathbf{\hat{v}_K}, \mathbf{\hat{Z}})$}
\IF{$K \leq  \frac{ \bv \cdot \hat{\bz}^{\tilde{S}}}{v_0}    $}  
\STATE{$L_{t+1} =  K, U_{t+1} = U_t$, $S^* = \tilde{S} $}
\ELSE 
\STATE{$L_{t+1} = L_t , U_{t+1} = K$} \\
\ENDIF 
\ENDWHILE
\RETURN{$ S^*$}
\vspace{19mm}
\end{algorithmic}
\end{algorithm}
\end{minipage}
}
\end{minipage}
\end{figure}

%%%%%%%%%%%%%%%%%%%%%%%%%%% THIRD ALGORITHM
\subsection{Third Algorithm: \alsh{}}\label{subsec:alsh}

In \ann, we  narrow our search interval based on the result of the MIPS query.  But when a probabilistic  method is used for  solving the MIPS query (such as when $\apargmax$ is implemented using the LSH data structure discussed before),  there is a chance of  narrowing down the search interval to an incorrect range. To address this, we build on the BZ algorithm~\citep{burnashev1974interval} to accommodate the possibility of failure (not being $(1+\nu)$-NN) in the $\apargmax$ operation. To simplify discussion, we will assume that the LSH structure (which solves the $(1+\nu)$-NN problem) has $1+\nu$ small enough such that for every query $y$, the only point $x'$ that satisfies $||x'-y||_2 \leq (1+\nu) \min_{x} ||x-y||_2$ is $\argmin_{x} ||x-y||_2$. This assumption eliminates the approximation error from the result returned by the data structure, and lets us focus on the probabilistic errors. Thus, the probability of  receiving an incorrect answer ($P_e)$ to the comparison query $K \leq  \max_{S \in \cS} f(S, \bv)$ is  the failure probability ($f$) of the LSH data structure.

The key difference between the BZ algorithm and a standard binary search is the choice of the decision threshold  point at which a comparison is made in each round. In binary search, we choose the  mid-point of the current search interval. As we cannot rule out any part of the original search interval when we receive noisy answers, in the BZ algorithm, we maintain a distribution on the value of the optimal revenue. In every iteration, we test if $K \leq \max_{S \in \cS}f(S, \bv)$ where $K$ is the median of the distribution and then update the distribution based on the result of the comparison. For the analysis of the performance of the BZ based algorithm \alsh \, we require that the errors in every iteration are independent of teach other. We can accomplish this in our setting by querying a different LSH data structure in every round. Thus, we create $T$ distinct LSH data structures that solve the approximate nearest neighbor problem, where $T$ is the number of desired iterations (we show that this is logarithmic in the desired accuracy level, and hence is not a significant overhead).

The \alsh \ algorithm, building on the BZ procedure and making use of LSH data structures, is given in  Algorithm~\ref{alg:alsh}. To keep things fairly general, we discuss the setting in which the feasible assortments do not have a compact representation, although a similar algorithm can be designed for the capacity constrained setting (essentially following the discussion in Section~\ref{alg:ann_outline}). 
To reiterate, because we use probabilistic nearest neighbor queries, we cannot guarantee an $\epsilon$-optimal revenue always, but we show below that we can achieve $\epsilon$-optimality with high probability.\\

\noindent\textit{Error Analysis of} \alsh \: We claim that the probability of error (i.e. the revenue given by the algorithm not being $\epsilon$-optimal) for \alsh \  decays exponentially with the number of iterations of the algorithm. 
Let $Q_e=1-P_e$, where $P_e$ is the true error probability in receiving a correct answer in each comparison step. Also, let $\beta  = 1-\alpha$, where $\alpha<0.5$ is an upper bound on $P_e$.

\begin{theorem}\label{thm:BZError}
After $T$ iterations of \alsh \ (Algorithm \ref{alg:alsh}) we have:
$$\footnotesize P(|\hat{\theta}_{T} - \theta^*| > \epsilon) \leq \frac{p_1 -\epsilon}{\epsilon} \ \left(\frac{P_e}{2\alpha} + \frac{Q_e}{2\beta}\right)^{T},$$
where $\theta^*$ is the true optimal revenue. 
\end{theorem}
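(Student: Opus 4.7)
The plan is to carry out a Burnashev--Zigangirov (BZ) style potential analysis on the Bayesian posterior that \alsh{} maintains over the unknown optimal revenue $\theta^*$. First I would discretize the search interval $[0,p_1]$ into $N = p_1/\epsilon$ bins of width $\epsilon$, so that $\theta^*$ lies in some unique bin $i^*$, and initialize the posterior $\pi_0$ to uniform (so $\pi_0(i^*) = 1/N$). At each iteration the comparison threshold $K_j$ is the median of $\pi_j$, so the bins split into sets $S_+$ and $S_-$ (above and below $K_j$) with $\pi_j(S_+) = \pi_j(S_-) = 1/2$. After the (noisy) MIPS oracle answer indicates one side, $\pi_j$ is multiplied by $2\beta = 2(1-\alpha)$ on that side and by $2\alpha$ on the other; the median property keeps $\pi_{j+1}$ a probability measure automatically.

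The heart of the proof is to track the potential
\begin{align*}
\Phi_j \;=\; \frac{1 - \pi_j(i^*)}{\pi_j(i^*)}.
\end{align*}
Assume WLOG $i^* \in S_+$. With probability $Q_e$ the oracle correctly outputs ``$+$'' and $\pi_{j+1}(i^*) = 2\beta\,\pi_j(i^*)$; with probability $P_e$ it outputs ``$-$'' and $\pi_{j+1}(i^*) = 2\alpha\,\pi_j(i^*)$. Substituting $1/\pi_j(i^*) = \Phi_j + 1$ and combining both branches, a direct computation yields
\begin{align*}
\mathbb{E}[\Phi_{j+1} \mid \pi_j] \;=\; \gamma(\Phi_j + 1) - 1 \;\leq\; \gamma\,\Phi_j,
\end{align*}
where $\gamma = \frac{P_e}{2\alpha} + \frac{Q_e}{2\beta}$. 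The inequality uses $\gamma \leq 1$, which follows from the identity $\gamma - 1 = \frac{(P_e - \alpha)(1 - 2\alpha)}{2\alpha(1 - \alpha)}$ together with the standing assumptions $P_e \leq \alpha < 1/2$. Because \alsh{} queries $T$ independent LSH data structures, the oracle errors across iterations are independent, and iterating via the tower property gives $\mathbb{E}[\Phi_T] \leq \gamma^T \Phi_0 = \gamma^T (N - 1) = \frac{p_1 - \epsilon}{\epsilon}\,\gamma^T$.

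To translate this into the theorem, I would apply Markov's inequality to obtain $\mathbb{P}(\Phi_T \geq 1) \leq \mathbb{E}[\Phi_T]$. The event $\{\Phi_T < 1\}$ is equivalent to $\pi_T(i^*) > 1/2$, which forces $i^*$ to be the unique MAP bin of $\pi_T$. Assuming $\hat{\theta}_T$ is returned as any representative point of the MAP bin (e.g.\ its midpoint), $|\hat{\theta}_T - \theta^*| \leq \epsilon$ on this event, yielding the bound in the statement.

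The main obstacle is the one-step expected-potential calculation: one must mix the correct and incorrect oracle branches while simultaneously tracking how $\pi_{j+1}(i^*)$ and the complementary mass $1 - \pi_{j+1}(i^*)$ transform on both halves of the support, and then repackage the result into the clean factor $\gamma$. The remaining ingredients -- conservation of posterior mass via the median choice, independence across rounds via fresh LSH structures, and the MAP-recovery event $\pi_T(i^*) > 1/2$ -- are routine once the contraction lemma is in hand.
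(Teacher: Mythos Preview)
Your overall architecture is the same as the paper's: define the potential $\Phi_j=(1-\pi_j(i^*))/\pi_j(i^*)$, show a one-step contraction $\mathbb{E}[\Phi_{j+1}\mid \pi_j]\le \gamma\,\Phi_j$, telescope using independence across rounds, and finish with Markov's inequality plus the observation that $\pi_T(i^*)>1/2$ pins down the correct bin. The paper proceeds exactly this way (with $M_{\theta^*}(j)$ playing the role of your $\Phi_j$).

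Where you diverge from the paper is in the one-step contraction, and there is a genuine gap there. Your computation assumes two idealizations that do not hold for Algorithm~\ref{alg:alsh}: (i) that the median exactly bisects the posterior, so $\pi_j(S_+)=\pi_j(S_-)=1/2$, and (ii) that the Bayes update is a clean multiplication by $2\beta$ or $2\alpha$. Because the posterior is piecewise constant on $p_1/\epsilon$ bins, the median generically falls inside a bin and neither side has mass exactly $1/2$; the algorithm compensates by \emph{randomizing} $K_{j+1}$ over the two adjacent bin boundaries with probabilities $Q(j),1-Q(j)$, and the posterior update carries a normalization $1\pm\tau(\beta-\alpha)$ with $\tau\neq 0$. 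In addition, the oracle error in \alsh{} is one-sided (when $K_{j+1}>\theta^*$ the comparison is always correct), which changes the mixture you must average over. The paper therefore splits into three cases ($u(j)=u(\theta^*)$, $u(j)>u(\theta^*)$, $u(j)<u(\theta^*)$), tracks the exact update with the $\tau$-dependent normalization, and only after bounding each case recovers the common factor $\frac{P_e}{2\alpha}+\frac{Q_e}{2\beta}$. Your clean identity $\mathbb{E}[\Phi_{j+1}\mid\pi_j]=\gamma(\Phi_j+1)-1$ is correct only in the perfectly-bisected, two-sided-error idealization; to match the actual algorithm you need that case analysis, which is precisely the work you labeled ``routine.'' Finally, note that $\hat\theta_T$ in the algorithm is $\max(\theta_T,f(\hat S,\bv))$ with $\theta_T$ the posterior median, not a MAP-bin representative; the paper closes by observing $|\hat\theta_T-\theta^*|\le|\theta_T-\theta^*|$, which you should use instead of the MAP argument.
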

The proof of this Theorem is presented in the electronic companion. It is based on a similar analysis done by \cite{burnashev1974interval} and \cite{castro2006upper}. We now make a few remarks. The bound above depends on the quantity $P_e$. We do the following modification when only an upper bound on its value (say $P_{max}$) is known (this is the case for LSH based approximate nearest neighbor data structures). Let $W(P_e, \alpha) = \frac{P_e}{2\alpha} + \frac{Q_e}{2\beta}. $ This is a linear and increasing function of  $P_e$. Thus,  
$W(P_e, \alpha) \leq W(P_{max}, \alpha)$. By appropriately tuning the LSH data structure (this is always possible, See Section~\ref{subsec:LSH}), set $P_{max} < 0.25$. Choosing $\alpha = \sqrt{P_{max}}$ (a valid upper bound on $P_e$), we get $W(P_{max}, \sqrt{P_{max}}) = 0.5 + \sqrt{P_{max}}$. Thus, we have
$ P(|\hat{\theta}_{T} - \theta^*| > \epsilon) \leq \frac{p_1 -\epsilon}{\epsilon} \ \left(\sqrt{P_{max}} + 0.5 \right)^{T}. $
With the above reduction,  the number of iterations  required to get to a desired reliability level can now be estimated. If the desired accuracy level is $\gamma$ i.e., $ P(|\hat{\theta}_{T} - \theta^*| > \epsilon) \leq \gamma$, then $ T \geq \log_{0.5 + \sqrt{P_{max}}} \frac{\gamma \epsilon}{p_1 -\epsilon}$. Thus, the number of iterations grows logarithmically in the desired accuracy level. Finally, note that $\hat{\theta}_T$ can be greater than the optimal revenue, and \alsh \ does not always output an assortment that has revenue greater than or equal to $\hat{\theta}_T$.

\begin{algorithm}
\caption{\alsh{}}
\label{alg:alsh}
\footnotesize
\begin{algorithmic} 
\REQUIRE{ Prices $\{p_i\}_{i=1}^{n}$, tolerance parameter $\epsilon$ such that $p_1 \epsilon^{-1} \in \mathbb{N}$}, number of steps $T$, upper bound $\alpha < 0.5$ on the error probability in the $\apargmax$ operation, and let $\beta := 1 - \alpha$. \\
Posterior $\pi_j : [0,p_1] \rightarrow  \mathbb{R} $ after $j$ stages:  
$$ \pi_j(x) = \sum_{i=1}^{p_1 \epsilon ^{-1}}a_i(j)\mathbf{1}_{I_i}(x), $$
where  $I_1 = [0,\epsilon]$ and $ I_i = (\epsilon (i-1),\epsilon i]$ for  $i \in \{2, \cdots, p_1 \epsilon^{-1} \}$. Let $\ba=[a_1(j), \cdots, a_{p_1 \epsilon^{-1}(j)} ]$. \\
Initialize $a_i(0) = {p_1}^{-1} \epsilon \ \forall i , \hat{S} = \{ 1\},  j=0$. %,T^* =1

\WHILE{ $j < T$} 
\STATE{ \textbf{Sample Selection:} Define $u(j)$ such that $\sum_{i=1}^{u(j)-1}a_i(j) \leq \frac{1}{2} \ , \sum_{i=1}^{u(j)}a_i(j) > \frac{1}{2} $}. Let \\
$K_{j+1} = 
\begin{cases}
{p_1}^{-1} \epsilon (u(j)-1)  & \mbox{ with probability }  Q(j), \textrm{ and} \\
{p_1}^{-1} \epsilon u(j) & \mbox{ with probability }  1 - Q(j), \\
\end{cases} 
$ where $Q(j) = \frac{\tau_2(j)}{\tau_1(j)+\tau_2(j)}$, and  \\
$\tau_1(j) = \sum_{i=u(j)}^{p_1\epsilon^{-1}}a_i(j) - \sum_{i=1}^{u(j)-1}a_i(j)$, \\
$\tau_2(j) = \sum_{i=1}^{u(j)}a_i(j) - \sum_{i=u(j)+1}^{p_1 \epsilon ^{-1}}a_i(j)$. 

\vspace{3mm}
\textbf{Noisy Observation:} \STATE{$ \hat{\bz}^{\tilde{S}} = \apargmax \ (\mathbf{\hat{v}}_{K_{j+1}}, \mathbf{\hat{Z}})$ } 
\STATE{If $f(\tilde{S}, v) > f(\hat{S}, v), \textrm{ set } \hat{S} = \tilde{S}$}.

\vspace{3mm}
\textbf{Update posterior:} The posterior is updated using the Bayes rule. Note that $K_{j+1} = p_1^{-1} \epsilon u, u \in \mathbb{N}.$ Define 
$$  h(K_{j+1}) = \mathbf{1}\{K_{j+1} \leq  \frac{\mathbf{\hat{v}}_{K_{j+1}} \cdot \hat{\bz}^{\tilde{S}} }{v_0} \}, \quad\textrm{ and }\quad \tau = \sum_{i=1}^u a_i(j) - \sum_{u+1}^{p_1 \epsilon^{-1}} a_i(j). $$

For  $i \leq u$, we have the update \\
{$a_i(j+1) = \begin{cases} 
\frac{2\beta}{1+\tau(\beta - \alpha)} & \mbox{if }  h(K_{j+1}) = 0, \textrm{ and} \\
\frac{2\alpha}{1-\tau(\beta - \alpha)} & \mbox{if }  h(K_{j+1}) = 1. \\
\end{cases}
$}

For  $i > u$, we have the update \\
{$a_i(j+1) = \begin{cases} 
\frac{2\alpha}{1+\tau(\beta - \alpha)} & \mbox{if }  h(K_{j+1}) = 0, \textrm{ and} \\
\frac{2\beta}{1-\tau(\beta - \alpha)} & \mbox{if }  h(K_{j+1}) = 1. \\
\end{cases}
$}
\ENDWHILE
\vspace{3mm}
\STATE{ $\theta_T$ is defined as the median of the posterior distribution i.e. $ \int_0^{\theta_T }\pi_T(x) = \frac{1}{2} $. }

\RETURN{$ \hat{S},$ and $ \hat{\theta}_T =  \max \left( \theta_T, f(\hat{S},\bv) \right)$} %\hat{\theta}_{T^*} = f(S^*,v), T^* $}
\end{algorithmic}
\end{algorithm}

\subsection{Time and Space Complexities of the Proposed Algorithms}\label{subsec:time-complexity}

In \ann,  the MIPS query in every iteration can be solved exactly in time $O(\noofprod \noofset)$ when there is no compact representation for the set of feasible assortments. Here $\noofprod$ \ is the dimension of the vectors (equals the number of items) and $\noofset$ is the number of points in the search space (equals the number of feasible sets).  In every iteration, we cut down the search space for the optimal revenue by half. We start with a search space of range $[0,p_1]$. Thus, the number of iterations to get to the desired tolerance is $ \left\lceil \log \frac{p_1}{\epsilon} \right \rceil$. Thus, the time taken by \ann \ is $O(\noofprod \noofset \log \frac{p_1}{\epsilon}  )$. While this may seem worse than linear scan (whose complexity is $\Theta(nN)$), exact nearest neighbor searches in vector spaces are very efficient in practice, and lead to \ann \ having a much better computational performance compared to linear scan (see Section~\ref{sec:experiments}).  
Both \ann{} and the exhaustive search typically don't require any additional storage except the space needed for storing all the feasible sets. Thus, their space complexity is $O(nN)$. For the case of assortment planning under capacity constraint, the  \textsc{compare-step} amounts to finding top $C$ among $n$ elements. This has a complexity of $O(n\log C)$. Thus, the time taken by the algorithm is  $O(n\log C \log \frac{p_1}{\epsilon})$. Note that other algorithms  for solving the capacitated assortment planning problem like ADXOpt and \textsc{STATIC-MNL} have time complexity quadratic in $\noofprod$. Exhaustive search is not competitive here (its time complexity is O($n^C$)).

In \aheu, it takes $O(\noofprod \noofset ^ \rho)$ time to run the approximate near neighbor query, with $\rho < 1$ (See Section~\ref{subsec:LSH}). The number of iterations is $\left\lceil \log \frac{p_1}{\epsilon - 2(\nu^2 + 2\nu)} \right \rceil$ (see proof in the electronic companion). Thus, the time complexity is $O\left(\noofprod \noofset^ \rho  \log \frac{p_1}{\epsilon - 2(\nu^2 + 2\nu)} \right)$. The space complexity  is $O( \noofprod \noofset^ {(1+\rho)} )$. Similarly, the time complexity of \alsh{} \ is $O\left(\noofprod \noofset^ {\rho}\log \frac{p_1 - \epsilon}{\gamma\epsilon}\right)$ and the space complexity is $O(\noofprod \noofset^ {(1+\rho)} )$.

\section{Experiments}
\label{sec:experiments}

We empirically validate the runtime performance of two of the proposed algorithms, namely \ann \ and \aheu \ using real and synthetic datasets. Firstly, because we expect \alsh \ to perform very similar to \aheu \ in terms of accuracy, we omit its performance comparisons. Second, we implement a simpler version of \aheu \, as shown in Algorithm~\ref{alg:aheu_simpler}, where instead of comparing with two thresholds as described in Section~\ref{subsec:aheu}, we compare with a single comparison threshold akin to \ann. This is because the approximation parameter $\nu$ depends on the implementation of $\apargmax$, with different implementations giving different empirical results. Further, the implementation we choose below implicitly changes the approximation parameter in a data-driven way to get the best performance for inner-product search. Hence, Algorithm~\ref{alg:aheu_simpler} is defined to side-step these complexities. A consequence of this is that the reported performance numbers may be slightly better in terms of time complexity and slightly worse in terms of solution quality and percentage errors in revenue. Nonetheless, as we show below, the redefined simpler algorithm (w.l.o.g. we will call it \aheu{} in this Section) does capture the key aspects that we hoped for: better performance in terms of computation time while working with arbitrary collection of feasible assortments (as well as the capacitated setting) without much impact on revenue or solution quality. For reproducibility, code corresponding to all the experimental results presented in this Section is provided at the following link: \url{https://github.com/thejat/data-driven-assortments}.

For the case of general assortments, we compare our algorithms against an algorithm which performs exhaustive search. And for the case of cardinality constrained assortments, we compare these with other algorithms such as \textsc{Static-MNL}~\citep{capMNL}, ADXOpt~\citep{jagabathula2014assortment} and a linear programming (LP) formulation~\citep{davis2013assortment}. All experiments are run on a 6 core 64GB 64-bit intel machine (i7-6850K \@ 3.6GHz) with python 2.7. We use LSH Forest~\citep{scikit-learn,andoni2017lsh} and NearestNeighbors from Scikit-learn for solving MIPS approximately and exactly respectively, and  CPLEX 12.7 for solving the LP. Performance is measured in terms of the mean computational time (sec), mean relative error in the revenue obtained as well as the mean overlap between the assortment output by our algorithms and the optimal assortment output, where the average is over multiple Monte Carlo runs.

\subsection{Datasets} 
We use two different types of real data sets, one as source for real prices and the second as a source for general data-driven assortments without a compact representation. For prices, we use the publicly available online micro price dataset from the Billion Prices Project~\citep{IAH6Z6_2016} to generate item prices. This dataset contains daily prices for all goods sold by 7 large retailers in Latin America (3 retailers) and the USA (4 retailers) between 2007 to 2010. Among the US retailers, we use pricing data from a supermarket and an electronics retailer to generate our assortment planning instances of varying sizes. The former contains 10 million daily observations for 94,000 items and the latter contains 5 million daily observations for 30,000 items. We use prices from 50 different days when generating  instances for 50 Monte Carlo runs under different settings described below. The collection of assortments in these instances are either general or capacitated. The MNL parameters  are each chosen from the uniform distribution $U[0,1]$.

For generating data-driven assortments, we use publicly available transaction datasets used in frequent itemset mining~\citep{borgelt2012frequent}. In particular, we use the retail, foodmart, chainstore and e-commerce transaction logs~\citep{fournier2014spmf} to create collections of general assortments. We use the FPgrowth algorithm from the spmf~\citep{fournier2014spmf} program to first generate frequent itemsets with appropriate minimum supports and then prune out frequent itemsets with low cardinalities (e.g., singletons) to obtain our collection of assortments. Table~\ref{table:freq-itemset-stat} describes some statistics of the assortments generated. We again create 50 instances from each dataset by generating the price and the MNL parameter vectors using uniform distributions $U[0,1000]$ and $U[0,1]$ respectively.

\begin{comment}

\begin{table}[h]
\begin{center}

\caption{Assortments generated from transactional datasets. \label{table:freq-itemset-stat}}
%\resizebox{\columnwidth}{!}{%
\begin{tabular}{ |l|c|c|c|c| } 
 \hline
 Dataset & Retail & Foodmart & Chainstore & E-commerce \\ \hline
 Number of transactions & 88162 &4141 & 1112949 & 540455 \\ \hline
 Number of items & 3160 & 1559 & 321 & 2208 \\ \hline
 Number of general assortments & 80524 & 81274 & 75853 & 23276 \\ \hline
 Size of largest assortment & 12 & 14 & 16 & 8\\ \hline
 Size of smallest assortment & 3 & 4 & 5 & 3\\ \hline
\end{tabular}

\end{center}
\end{table}
\end{comment}

\subsection{General Assortments}

In the first setting, we use the assortments that were obtained using frequent itemset mining and post-processing (to remove assortments of low cardinality) in order to compare the performance of \ann, \aheu \ and exhaustive search. The tolerance parameter $\epsilon$ is set to $0.1$ for \ann \ as well as \aheu  \ in this and all subsequent experiments. In the LSH Forest~\citep{scikit-learn} subroutine, the accuracy and the computational efficiency is controlled by two parameters: (a) number of candidates (default value set to $80$), and (b) number of estimators (default value set to $20$). Note that the LSH forest structure contains multiple LSH trees. The number of estimators parameter specifies the number of trees and the number of candidates specifies the minimum number of near neighbors that should be chosen from each tree. 
The results are plotted in Figure~\ref{fig:gen-ast-real} (mean values across 50 runs are reported). As can be inferred from the plots, \aheu \ is $2$-$3\times$ faster than exhaustive search and also better than \ann \ without sacrificing much of the revenue. As mentioned before, these instances cannot be efficiently handled by either integer programming formulations (unless there is an efficient representation of the feasible assortments) or by other specialized approaches. 

\begin{comment}

 \begin{figure}[h]
 \centering
  \includegraphics[width=.48\textwidth]{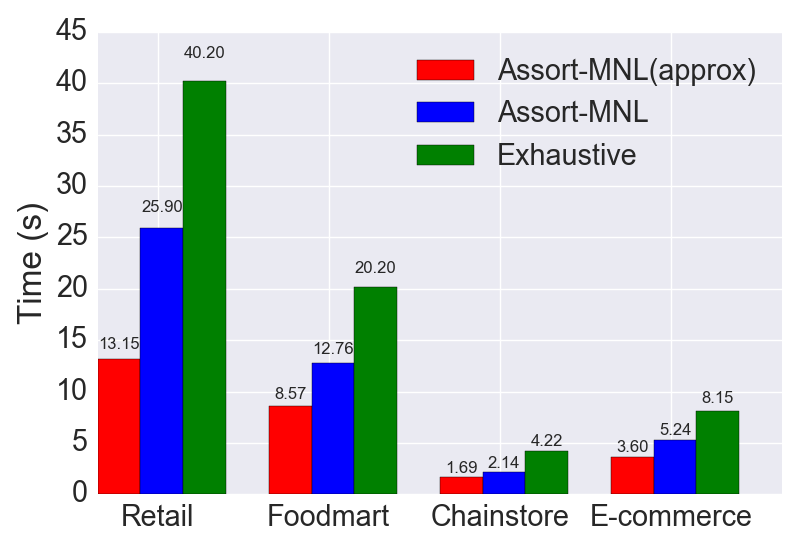}
  \includegraphics[width=.48\textwidth]{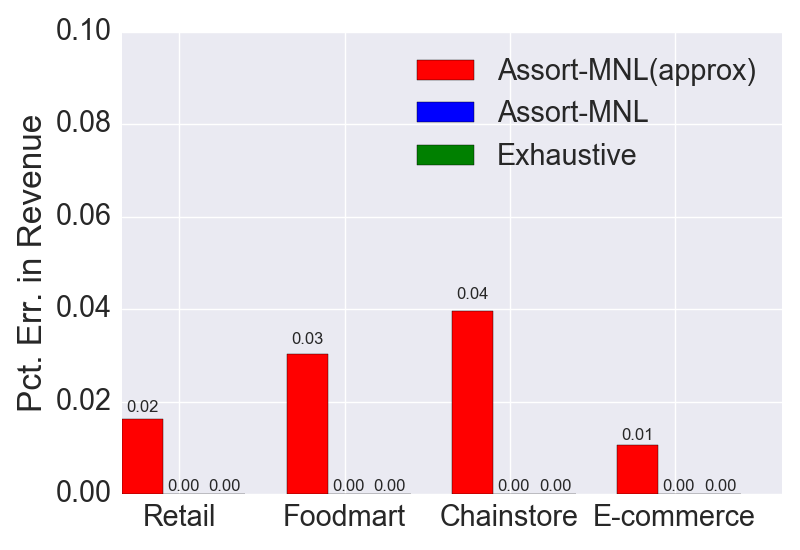}
 %\label{fig:gen-ast-real-reverr}
 \includegraphics[width=.48\textwidth]{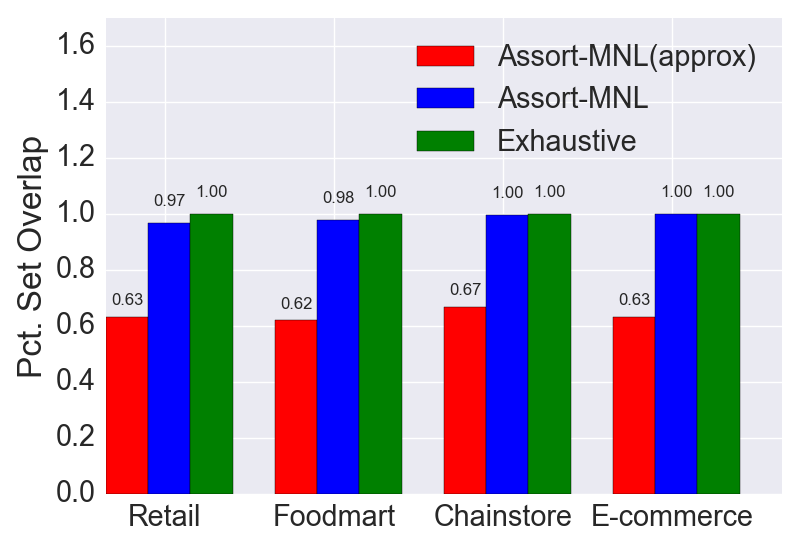}
 %\label{fig:gen-ast-real-setolp}
 \caption{Performance of \ann{} and \aheu{} over instances derived from four different frequent itemset datasets.  \label{fig:gen-ast-real}}
 \end{figure}
\end{comment}
 
In the second setting, we use the pricing data from the Billion Prices project and generate a fixed number of assortments from the set of all assortments uniformly at random. In particular, we vary the number of assortments to be from the set $\{100,200,400,800,1600,3200,6400, \allowbreak 12800, 25600, 51200\}$.  Again, we report results averaged over 50 Monte Carlo runs, as shown in Figure~\ref{fig:gen-ast-real-price}. We run multiple versions of \aheu \ with different accuracies controlled by the number of candidates parameter and the number of estimates parameter ranging over sets $\{80,160,200\}$ and 
$\{20,40,100\}$ respectively for the underlying LSH Forest~\citep{scikit-learn} subroutine. As can be observed, our proposed algorithms are much better than exhaustive search in terms of processing time while being very close to the optimal in terms of solution quality and revenue. In particular, we can trade off speed (timing performance) of \aheu \ with accuracy (lower percentage of assortment set overlap). We obtained qualitatively similar performances for synthetic data (prices uniformly generated from the interval $[0,1000]$), and thus, omit their plot here.

\begin{comment}
 \begin{figure}[h]
 \centering
  \includegraphics[width=.52\textwidth]{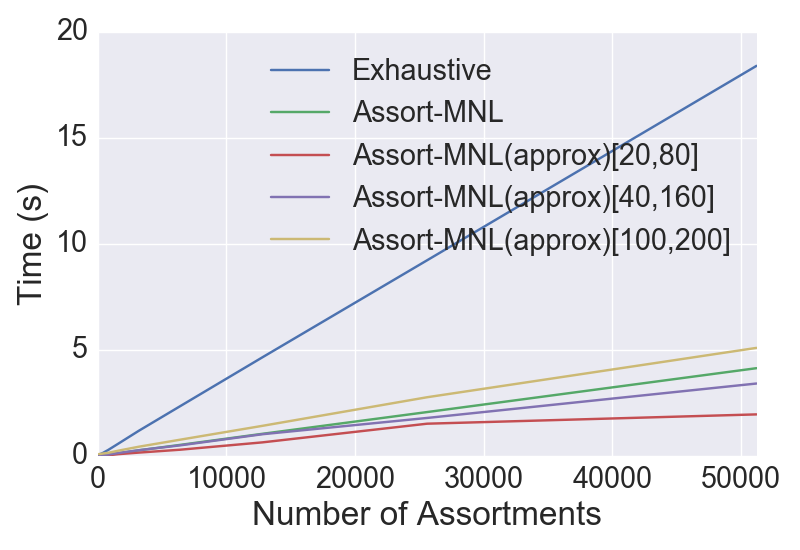}
 %\label{fig:gen-ast-real-price-time}
  \includegraphics[width=.48\textwidth]{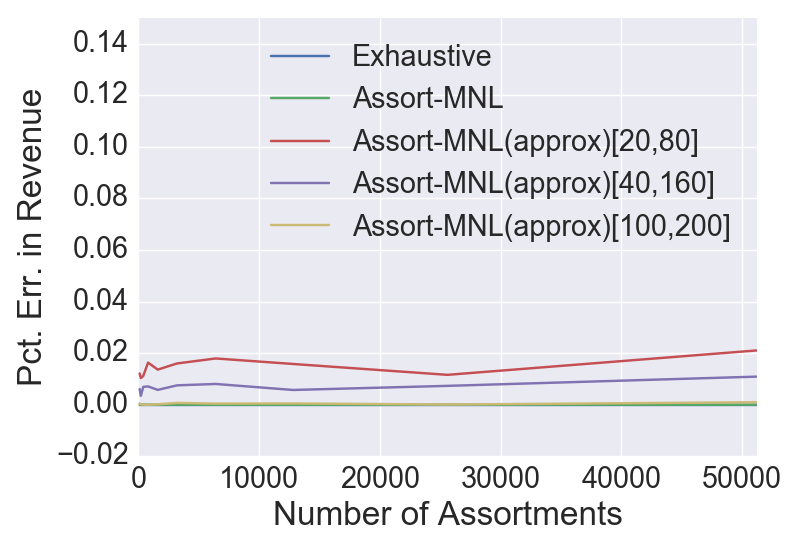}
 %\label{fig:gen-ast-real-price-reverr}
 \includegraphics[width=.48\textwidth]{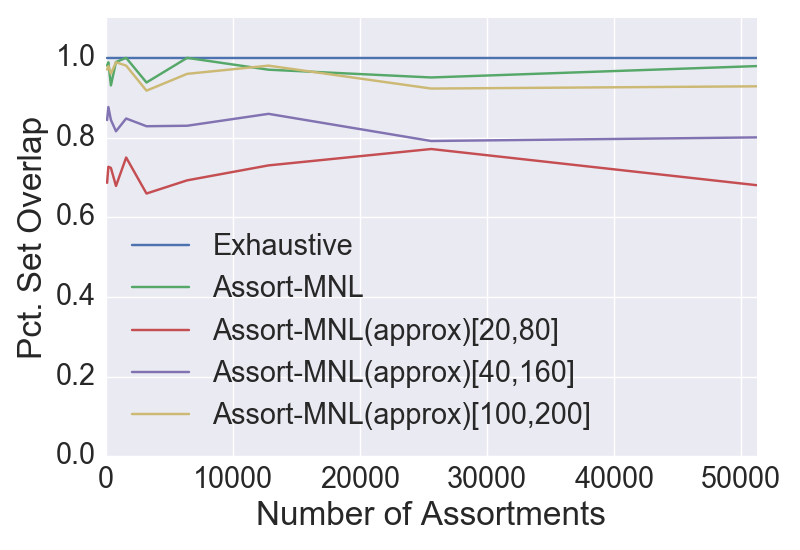}
 %\label{fig:gen-ast-real-price-setolp}
 \caption{Performance of \ann{} and \aheu{} over instances derived from the Billion prices dataset. The x-axis corresponds to the number of feasible assortments (uniformly sampled).
 \label{fig:gen-ast-real-price}}
 \end{figure} 
\end{comment}

\subsection{Cardinality-constrained Assortments}

In this experiment, we explore how \ann \ fares as compared to the specialized algorithms viz., ADXOpt, LP and Static-MNL in the capacity-constrained setting (exhaustive search is not considered because its performance is an order of magnitude worse than all these methods).  We run experiments with both synthetically generated instances as well as instances generated with prices from the Billion Prices dataset (to capture real price distributions), and observe similar qualitative results, so we omit the former. The cardinality parameter is chosen to be $50$ in all cases. Figure~\ref{fig:cap-real-price-prod} shows the performance of all the algorithms for the Billion Prices dataset. The number of items was varied from the set $\{100, 250, 500, 1000, 3000, 5000, 7000,10000,\allowbreak 20000\}$. We  observe that \ann \ runs much faster than both Static-MNL and LP. For instance, when the number of items is $20000$, \ann \ computes solutions in time that is two orders of magnitude less compared to ADXOpt and one order of magnitude less  compared to LP on average (we discount the time needed to set up the LP instance and only report the time to solve the instance using the CPLEX 12.7 solver, otherwise the relative gains would be much higher). At the same time, the percentage loss in revenue of assortments reported by \ann \  is very close to 0. 
We have re-plotted the timing performance of ADXOpt separately in Figure~\ref{fig:cap-real-price-prod-adxopt-static-mnl}, as its running time (plotted as intensity) varies as a function of the size of the optimal assortment, which is not the case with \ann \ and LP. If the instance happens to have a small optimal assortment, ADXOpt can get to this solution very quickly. On the other hand, it spends a lot of time when the optimal assortment size is large. This is illustrated through an intensity plot as opposed to a one-dimensional mean timing plot. Similarly, we plot the timing performance of Static-MNL separately in Figure~\ref{fig:cap-real-price-prod-adxopt-static-mnl} because its performance turned out to be much worse than all the other methods even for moderate sized instances.

\begin{comment}
 \begin{figure}[h]
 \centering
  \includegraphics[width=.54\textwidth]{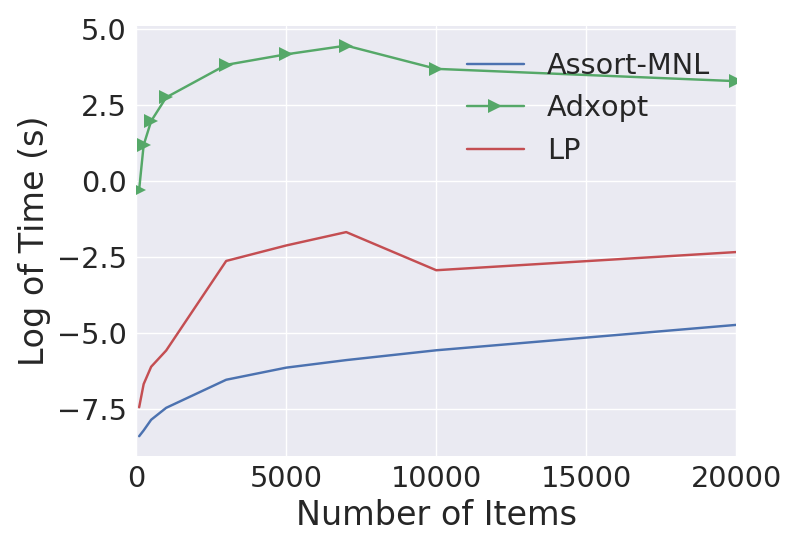}
  \includegraphics[width=.48\textwidth]{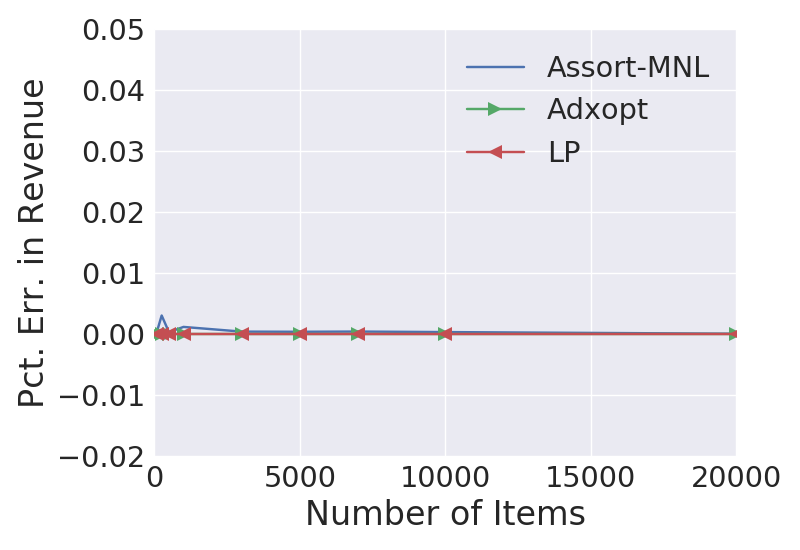}
 \includegraphics[width=.48\textwidth]{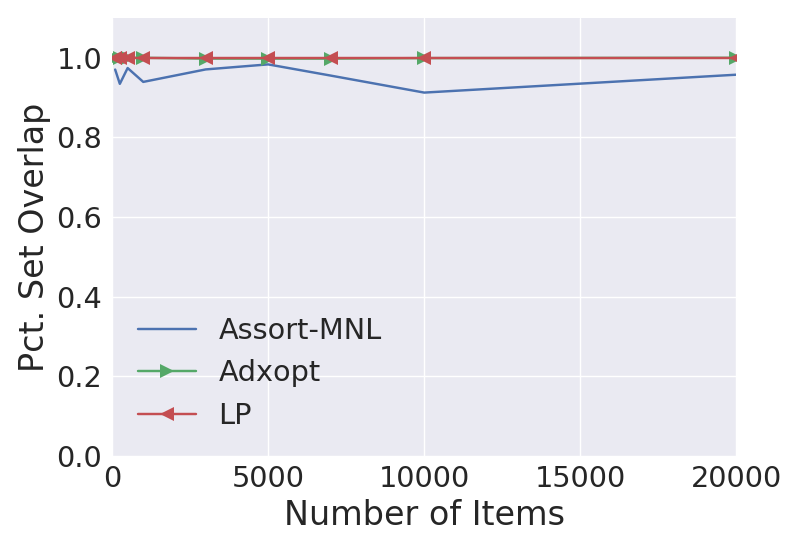}
 \caption{Performance of \ann{} and \aheu{} over instances derived from the Billion prices dataset. The x-axis corresponds to the number of items.
 \label{fig:cap-real-price-prod}}
 \end{figure} 

 \begin{figure}
 \centering
  \includegraphics[width=.48\textwidth]{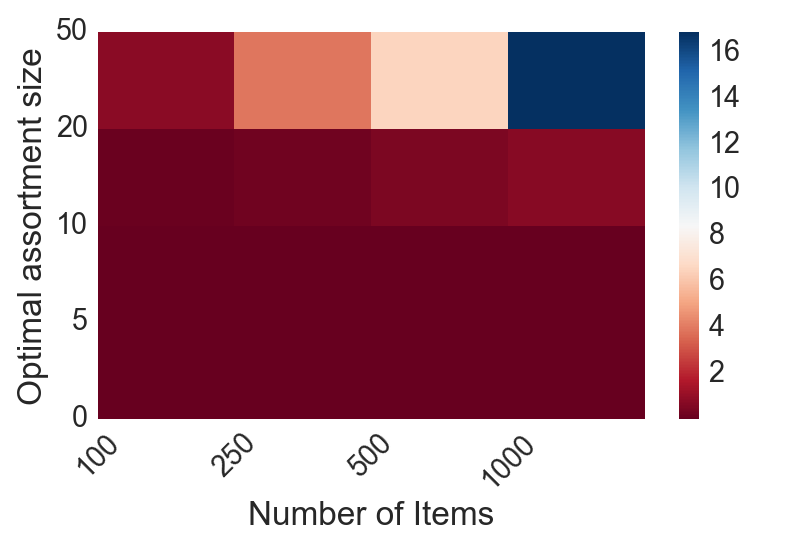}
  \includegraphics[width=.48\textwidth]{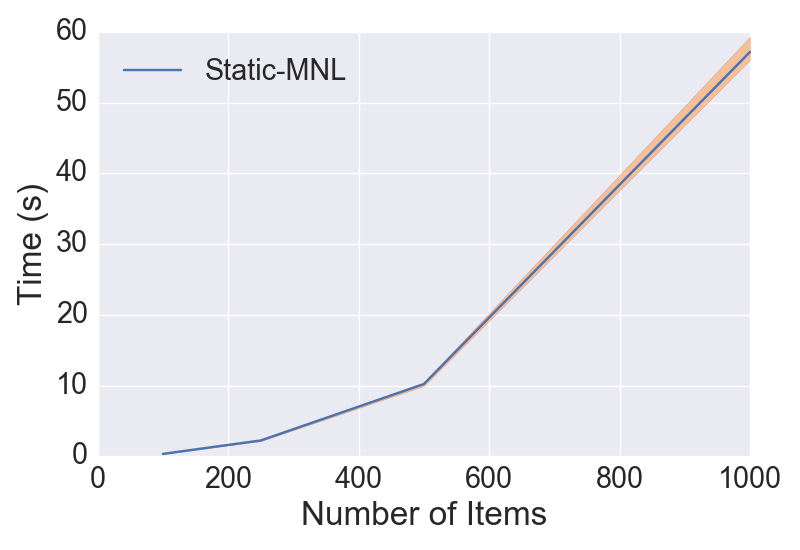}
 \caption{Performance of ADXOpt (top) and Static-MNL (bottom) over instances derived from the Billion prices dataset. The intensity (time in seconds) shows that the performance of ADXOpt is highly dependent on the size of the optimal assortment. Static-MNL's timing performances are an order of magnitude worse even for moderate sized instances.
 \label{fig:cap-real-price-prod-adxopt-static-mnl}}
 \end{figure}  
\end{comment}
 
In summary, our algorithm \ann \ is competitive with the state of the art algorithms, viz., ADXOpt, Static-MNL and LP in the capacitated setting and both \ann \ and \aheu are scalable in the general data-driven setting. Further, these algorithms vastly increase the assortment planning problem instances that can be solved efficiently under the MNL choice model. For instance, we show computational results when instances have a number of items that is of the order of $\sim10^5$ easily, whereas the regime in which experiments of the current state of the art methods~\citep{Vel} were carried out is with $\noofprod \sim 10^3$, thus representing two orders of magnitude improvement. By trading off accuracy, our algorithms can also achieve 10s-100s of millisecond budgets discussed in Section~\ref{sec:introduction}.
\section{Concluding Remarks}
\label{sec:conclude}

We proposed multiple efficient algorithms that solve the assortment optimization problem under the Multinomial Logit (MNL) purchase model even when the feasible assortments cannot be compactly represented. In particular, we motivate how frequent itemsets can be used as candidate assortments, making the planning problem data-driven. Our algorithms are iterative and build on binary search and fast methods for maximum inner product search to find optimal solutions for large scale instances. Though solving large scale instances efficiently under flexible assortments is a significant gain, there is scope for extending this work in many ways. For instance, studies by psychologists have revealed that buyers are affected by the assortment size as well as how frequently they change over the course of their interactions~\citep{iyengar2000choice}. Being able to handle arbitrary sets of assortments which can be based on frequent itemset mining, it would be interesting to model the purchase of multiple items and perform assortment optimization in that setting.

Our algorithms are meaningful both in online and offline setups: in online setups such as e-commerce applications, our algorithms scale as the problem instances grow. In offline setups, our algorithms afford flexibility to the decision maker  by allowing optimization over arbitrary assortments, which could be driven by business insights or transaction log mining.

\bibliographystyle{informs2014} % outcomment this and next line in Case 1
\bibliography{assort_lsh} % if more than one, comma separated

\section*{Tables and Figures}
\label{sec:table}
% \vspace{-1.1cm}
\begin{table}[h]
\begin{center}

\caption{Assortments generated from transactional datasets. \label{table:freq-itemset-stat}}
\resizebox{.6\columnwidth}{!}{%
\begin{tabular}{ |l|c|c|c|c| } 
 \hline
 Dataset & Retail & Foodmart & Chainstore & E-commerce \\ \hline
 Number of transactions & 88162 &4141 & 1112949 & 540455 \\ \hline
 Number of items & 3160 & 1559 & 321 & 2208 \\ \hline
 Number of general assortments & 80524 & 81274 & 75853 & 23276 \\ \hline
 Size of largest assortment & 12 & 14 & 16 & 8\\ \hline
 Size of smallest assortment & 3 & 4 & 5 & 3\\ \hline
\end{tabular}
}%
\end{center}
\end{table}
% \vspace{-.5cm}
% \section{Figures}
% \label{sec:figures}
%  \vspace{-3cm}
\begin{figure}[ht]


 \centering
  \includegraphics[width=.3\textwidth]{gen_ast_real_time}
  \includegraphics[width=.3\textwidth]{gen_ast_real_revPctErr}
 %\label{fig:gen-ast-real-reverr}
 \includegraphics[width=.3\textwidth]{gen_ast_real_setOlp}
 %\label{fig:gen-ast-real-setolp}
 \caption{Performance of \ann{} and \aheu{} over general data-driven instances derived from four different frequent itemset datasets.  \label{fig:gen-ast-real}}
 \end{figure}
%  \vspace{-1cm}
 \begin{figure}[ht]


 \centering
  \includegraphics[width=.3\textwidth]{gen_ast_real_price_time}
 %\label{fig:gen-ast-real-price-time}
  \includegraphics[width=.3\textwidth]{gen_ast_real_price_revPctErr}
 %\label{fig:gen-ast-real-price-reverr}
 \includegraphics[width=.3\textwidth]{gen_ast_real_price_setOlp}
 %\label{fig:gen-ast-real-price-setolp}
 \caption{Performance of \ann{} and \aheu{} over general data-driven instances derived from the Billion prices dataset. The x-axis corresponds to the number of feasible assortments (uniformly sampled).
 \label{fig:gen-ast-real-price}}
 \end{figure} 
%  \vspace{-1cm}
\begin{figure}[ht]


 \centering
  \includegraphics[width=.3\textwidth]{cap_real_price_prod_time}
  \includegraphics[width=.3\textwidth]{cap_real_price_prod_revPctErr}
 \includegraphics[width=.3\textwidth]{cap_real_price_prod_setOlp}
 \caption{Performance of \ann{} and \aheu{} in the capacitated setting, over instances derived from the Billion prices dataset. The x-axis corresponds to the number of items.
 \label{fig:cap-real-price-prod}}
 \end{figure} 
%  \vspace{-1cm}
 \begin{figure}[ht]


 \centering
  \includegraphics[width=.3\textwidth]{cap_real_price_prod_adxopt}
  \includegraphics[width=.3\textwidth]{cap_real_price_prod_staticmnl}
 \caption{Performance of ADXOpt (left) and Static-MNL (right) over instances derived from the Billion prices dataset. The intensity (time in seconds) shows that the performance of ADXOpt is highly dependent on the size of the optimal assortment. Static-MNL's timing performances were found to be an order of magnitude worse even for moderate sized instances.
 \label{fig:cap-real-price-prod-adxopt-static-mnl}}
 \end{figure} 
%  \vspace{-1cm}
% \ECSwitch

% \ECHead{Appendix}

\section*{Appendix}

%\begin{proof}[\textbf{Proof of Theorem \ref{thm:BZError}}]
%$\noindent \textbf{Proof of Theorem \ref{thm:BZError}:}

\noindent\textbf{I: An Illustration of how to compute nearest neighbor using near neighbor data structures}

To give intuition about how near neighbor data structures can be used to compute the nearest neighbor, we describe a simple strategy, which is as follows. We create multiple near neighbor data structures as described in Theorem~\ref{thm:near-neighbor} using different threshold values ($r$) but with the same success probability $1-f$ (by amplification for instance). When a query vector is received, we calculate the near-neighbors using the hash structure with the lowest threshold. We continue checking with increasing value of thresholds till we find at least one near neighbor. Let $\widetilde{r}$ be the first threshold for which there is at least one near neighbor. This implies that the probability that we don't find the true nearest neighbor is at most $f$ because the near neighbor data structure with the threshold $\widetilde{r}$ has success probability $1-f$.  Assuming that the different radii in the data structures are such that the number of points returned for the threshold  $\widetilde{r}$ is sublinear in $N$ i.e. $O(N^\eta)$ for some $\eta < 1$, gives us the desired data structure for the nearest neighbor problem. While the last assumption is reasonable as long as the different radii are not very far apart, the structure corresponding to Theorem~\ref{thm:nearest-neighbor} does not need it. Further details on this data structure can be found in ~\cite{har2012approximate}.\\

\noindent\textbf{II: Solving MIPS using LSH }

We illustrate how the MIPS problem can be solved approximately using a specific LSH family, as described in~\cite{neyshabur2015symmetric}. For $x \in \mathcal{R}^{\noofprod}, || x ||_2 \leq 1$, we first define a preprocessing transformation $T: \mathcal{R}^{\noofprod} \rightarrow \mathcal{R}^{\noofprod+1}$ as $ T(x) =  [x; \sqrt{1- ||x||_2^2}]$. We sample a spherical random vector $a \sim \mathcal{N}(0,I)$ and define the hash function as $h_a(x) = sign(a\cdot x)$. All points in the search space are preprocessed as per the transformation $T(\cdot)$ and then hashed using hash functions, such as the above, to get a set of indexed points.
During run-time when we obtain the query vector, it is also processed in the same way i.e. first transformed through $T(\cdot)$ and then 
through the same hash functions that were used before. Checking for collisions in a way that is similar to the near neighbor problem in Section~\ref{subsec:LSH} retrieves the desired similar points.

Without loss of generality, assume that the query vector $y$ has $||y||_2 = 1$. The following guarantee can be shown for the probability of collision of two hashes:
$$\mathcal{P}[h_a(T(x)) = h_a(T(y))] = 1 - \frac{cos^{-1}(x\cdot y)}{\pi},$$ 
which is a  decreasing function of the inner product $x\cdot y$. For any chosen threshold value $D$ and  $c<1$, we consequently get the following:
\begin{itemize}
\item  If  $x\cdot y \geq D$, then
$\mathcal{P}[h_a(T(x)) = h_a(T(y))]  \geq 1 - \frac{cos^{-1}(D)}{\pi}$.
\item  If  $x\cdot y \leq cD$, then
$\mathcal{P}[h_a(T(x)) = h_a(T(y))]  \leq 1 - \frac{cos^{-1}(cD)}{\pi}$.
\end{itemize}

We are by no means restricted to using the above hash family. Because of the transformation $T(\cdot)$, we have obtained a nearest neighbor problem that is equivalent to the original inner product search problem. Thus, any hash family that is appropriate for the Euclidean nearest neighbor problem can be used (see Section~\ref{subsec:LSH}). \\

\noindent\textbf{III: Justifying the updates in \aheu \ }

\begin{proposition}
If $\hat{K} \geq x \cdot y$, then $K \geq x^* \cdot y$.
\end{proposition}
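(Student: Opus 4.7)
The plan is to chain together two inequalities: the approximation guarantee from Lemma \ref{lemma:nn2mips} (which relates the inner product of the returned $\apargmax$ vector $x$ to that of the true $\argmax$ vector $x^*$) and the hypothesis $\hat{K} \geq x \cdot y$ (which pins down $K$ via the definition $\hat{K} = 1 + (1+\nu)^2(K-1)$). The key observation is that both quantities have the same algebraic form, $1 + (1+\nu)^2(\,\cdot\,-1)$, so isolating $(K-1)$ on one side and $(x^* \cdot y - 1)$ on the other will let me compare them directly.

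First, I would apply Lemma \ref{lemma:nn2mips} with the query vector $y$ and the returned point $x$. Rearranging $1 + (1+\nu)^2(x^* \cdot y - 1) \leq x \cdot y$ by subtracting $1$ and then dividing by the positive quantity $(1+\nu)^2$ gives
\[
x^* \cdot y - 1 \;\leq\; \frac{x \cdot y - 1}{(1+\nu)^2}.
\]
Next, I would unpack the hypothesis $\hat{K} \geq x \cdot y$ using the definition $\hat{K} = 1 + (1+\nu)^2(K-1)$, subtract $1$ from both sides, and divide by $(1+\nu)^2 > 0$ to obtain
\[
K - 1 \;\geq\; \frac{x \cdot y - 1}{(1+\nu)^2}.
\]
Chaining these two inequalities yields $K - 1 \geq x^* \cdot y - 1$, and therefore $K \geq x^* \cdot y$, which is the claim.

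There is no real obstacle here: the argument is two one-line rearrangements glued together, and the only care required is to note that $(1+\nu)^2 > 0$ so that dividing preserves the inequality direction (the signs of $x \cdot y - 1$ and $x^* \cdot y - 1$ do not matter for this step, since the transformation used in the LSH reduction ensures norms are bounded by $1$ and the inequalities from the lemma hold on the nose). The proof is essentially mechanical once Lemma \ref{lemma:nn2mips} is in hand.
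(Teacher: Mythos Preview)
Your proof is correct and uses essentially the same ingredients as the paper: the inequality $1 + (1+\nu)^2(x^*\cdot y - 1) \leq x\cdot y$ from Lemma~\ref{lemma:nn2mips} together with the definition $\hat{K} = 1 + (1+\nu)^2(K-1)$. The only cosmetic difference is that the paper argues by contrapositive (assuming $x^*\cdot y > K$ and applying the monotone affine map $t\mapsto 1+(1+\nu)^2(t-1)$ forward), whereas you argue directly by inverting that map; the two are logically equivalent and equally short.
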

\begin{proof} \emph{Proof.}
We will prove the contrapositive of this statement. That is, we will show that if $x^*$ is such that $x^*\cdot y$ is indeed greater than $K$, then $\hat{K} \leq x \cdot y$.  If we could solve the exact MIPS problem with the query vector $y$ (that depends on $K$), we would get the solution $x^*$ and immediately conclude that $x^*\cdot y \leq K$. In reality, we are returned an approximate solution $x$ with the guarantee that $1 + (1+\nu)^2(x^*\cdot y - 1) \leq x \cdot y \leq x^*\cdot y$. But since, $K \leq x^*\cdot y$, we also have $1 + (1+\nu)^2(K - 1) \leq 1 + (1+\nu)^2(x^*\cdot y - 1)$. This readily implies $\hat{K} \leq x \cdot y$. \qed
\end{proof}

\noindent\textbf{IV: Proof of Theorem \ref{thm:BZError} }

\proof{}
The proof from \cite{burnashev1974interval} (that quantifies the error in the vanilla BZ algorithm) has been modified for our setting in the following manner: (a) we remove the restriction that the noise distribution in the $\apargmax$ operation should be Bernoulli, and (b) we generalize the setting such that the error probability $p_j$ in the $\apargmax$ operation at every iteration $j$ can be different with $P_e = \max_{j \in {1, \cdots T}} p_j$.

The interval $[0,p_1]$ is divided into subintervals of width $\epsilon, \ a_i(j)$ denotes the posterior probability that the
optimal revenue $\theta^*$ is located in the $i$-th subinterval after the $j$-th iteration, and $\theta_j$ denotes the median of the posterior after  the $j$-th iteration. Let $Y_j = h(K_j)$ denote the outcome of the comparison. %Our first step is to construct an upper bound for the probability $P(|\theta_T - \theta^*| > \epsilon)$.
Let $\theta^*$ be fixed but arbitrary, and define $u(\theta^*)$ to be the index of the bin $I_i$ containing $\theta^*$, that is $\theta^* \in I_{u(\theta^*)}$. In general, let $u(j)$ be defined as the index of the bin containing the median of the posterior distribution in the $j$-th iteration.
We define two more functions of $\theta^*$, $M_{\theta^*}(j)$ and $N_{\theta^*}(j)$ as below - 

\begin{gather*}
M_{\theta^*}(j) = \frac{1- a_{u(\theta^*)}(j)}{a_{u(\theta^*)}(j)}, \text{ and} \\
N_{\theta^*}(j + 1) = \frac{M_{\theta^*}(j + 1)}{M_{\theta^*}(j)} =\frac{ a_{u(\theta^*)}(j)(1 -  a_{u(\theta^*)}(j + 1))} {a_{u(\theta^*)}(j + 1)(1 -  a_{u(\theta^*)}(j))}.
\end{gather*}

%Note that $M_{\theta^*}(j)$ is a decreasing function of $a_{u(\theta^*)}(j)$.

After $T$ observations our estimate of $\theta^*$ is the median of the posterior density $\pi_T(x)$, which means that $\theta_T \in I_{u(T)}$. Taking this into account we conclude that

\begin{align*}
P(|\theta_T - \theta^*| > \epsilon) \ & \leq \ P(a_{u(\theta^*)}(j) < 1/2), \\
& = \ P(M_{\theta^*}(T) > 1), \\
& \leq \ E[M_{\theta^*}(T)]. \hspace{2mm} (\text{because of Markov's inequality}).
\end{align*}

%The last inequality follows from Markov's inequality.

Using the definition of $N_{\theta^*}(j)$, and manipulating conditional expectations we get:
\begin{align*}
E[M_{\theta^*}(T)] & =  E[M_{\theta^*}(T - 1)N_{\theta^*}(T)], \\
& = E [E[M_{\theta^*}(T - 1)N_{\theta^*}(T)|\ba(T - 1)]], \\
& = E [M_{\theta^*}(T - 1)E [N_{\theta^*}(T)|\ba(T - 1)]], \\
& \vdots \\
& = M_{\theta^*}(0)E [E[N_{\theta^*}(1)|\ba(0)] \cdots E[N_{\theta^*}(T)|\ba(T - 1)]], \\
&\leq M_{\theta^*}(0) \left\lbrace \max_{j \in \lbrace0,1,\cdots T-1 \rbrace}\max_{\ba(j)}E[N_{\theta^*}(j+1)|\ba(j)] \right\rbrace ^n.
\end{align*}

%Let the probability of error in the $\apargmax$ operation in the $j$th iteration be $p_j$.
The error in the $\apargmax$ operation is not Bernoulli but has the following behaviour. If $K_{j} > \theta^*$, then $Y_j= 0$ (there is no error). If $K_{j} < \theta^*$, then an error can occur and 
\begin{center}
$h(K_{j}) = 
\begin{cases}
1 & \text{with probability } 1-p_j, \textrm{ and} \\
0 & \text{with probability }p_j. \\
\end{cases}
$ \\
\end{center}

%In the remainder of the proof, we show $E[N_{\theta^*}(j + 1)|a_j] \leq 1 - \epsilon$, for some $\epsilon > 0$. 
To bound $P(|\hat{\theta}_T - \theta^*| > \epsilon)$, we are going to consider three cases: (i) $u(j) = u(\theta^*)$; (ii) $u(j) > u(\theta^*)$; and (iii) $u(j) < u(\theta^*)$. For each of these cases, we first derive an expression for $N_{\theta^*}(j + 1)$ (with some algebraic manipulation and simplification) as follows:

%\ds{Do we want to add the proof of the posterior update? The posterior is updated assuming the noise is Bernoulli}

$N_{\theta^*}(j+1) = 
\begin{cases}
\frac{1+(\beta-\alpha)x}{2\beta} \text{ with probability } B=1-A. \textrm{ and} \\ 
\frac{1-(\beta-\alpha)x}{2\alpha} \text{ with probability } A. \\ 
\end{cases}
$ \\
Thus:
\begin{enumerate}[(i)]
\item When $u(j) = u(\theta^*)$ and
  \begin{enumerate}
  \item $K_{j+1} = p_1^{-1}\epsilon (u(j)-1) \text{, then } 
   x = \frac{\tau_1(j) - a_{u(\theta^*)}(j)}{1- a_{u(\theta^*)}(j)}, \textrm{ and } A = p_{j+1}$.
   \item $K_{j+1} = p_1^{-1}\epsilon u(j) \text{, then } 
   x = \frac{\tau_2(j) - a_{u(\theta^*)}(j)}{1- a_{u(\theta^*)}(j)}, \textrm{ and }  A = 0$.
  \end{enumerate}
  \item When $u(j) > u(\theta^*)$ and
  \begin{enumerate}
  \item $K_{j+1} = p_1^{-1}\epsilon (u(j)-1) \text{, then } 
     x =  - \frac{\tau_1(j) + a_{u(\theta^*)}(j)}{1- a_{u(\theta^*)}(j)}, \textrm{ and }  A = 0$.
     \item $K_{j+1} = p_1^{-1}\epsilon u(j) \text{, then } 
     x = \frac{\tau_2(j) - a_{u(\theta^*)}(j)}{1- a_{u(\theta^*)}(j)}, \textrm{ and }  A = 0$.
  \end{enumerate}
   \item When $u(j) < u(\theta^*)$ and
  \begin{enumerate}
  \item $K_{j+1} = p_1^{-1}\epsilon (u(j)-1) \text{, then } 
       x =   \frac{\tau_1(j) - a_{u(\theta^*)}(j)}{1- a_{u(\theta^*)}(j)}, \textrm{ and }  A = p_{j+1}$.
       \item $K_{j+1} = p_1^{-1}\epsilon u(j) \text{, then } 
       x = - \frac{\tau_2(j) + a_{u(\theta^*)}(j)}{1- a_{u(\theta^*)}(j)}, \textrm{ and }  A = p_{j+1}$.
    \end{enumerate}
\end{enumerate}

As $0\leq \tau_1(j) \leq 1$ and $0 < \tau_2(j) \leq 1$, we have $|x| \leq 1$ in all the above cases. Define 
\begin{align*}
g_A(x) & = \frac{B(1+(\beta- \alpha)x)}{2\beta} + \frac{A(1-(\beta-\alpha)x)}{2\alpha} \\
& = \frac{B}{2\beta} + \frac{A}{2\alpha} + \left( \frac{B}{2\beta} - \frac{A}{2\alpha} \right) (\beta - \alpha)x.
\end{align*}
$g_A(x)$ is an increasing function of $x$ as long as $0<A<\alpha$. It is also an increasing function of $A$  when $x<1$ and $\alpha < 1/2$.

Now, let us evaluate $E[N_{\theta^*}(j+1)|\ba(j)]$. For the three cases we have 
\begin{enumerate}[(i)]
\item When $u(j) = u(\theta^*), $ \\
$E[N_{\theta^*}(j+1)|\ba(j)] = P_1(j) g_{p_{j+1}} \left( \frac{\tau_1(j) - a_{u(\theta^*)}(j)}{1-a_{u(\theta^*)}(j)} \right) + P_2(j) g_0 \left( \frac{\tau_2(j) - a_{u(\theta^*)}(j)}{1-a_{u(\theta^*)}(j)} \right). $
\item When $u(j) > u(\theta^*), $ \\
$E[N_{\theta^*}(j+1)|\ba(j)] = P_1(j) g_{0} \left( \frac{-\tau_1(j) - a_{u(\theta^*)}(j)}{1-a_{u(\theta^*)}(j)} \right) + P_2(j) g_0 \left( \frac{\tau_2(j) - a_{u(\theta^*)}(j)}{1-a_{u(\theta^*)}(j)} \right). $
\item When $u(j) < u(\theta^*), $ \\
$E[N_{\theta^*}(j+1)|\ba(j)] = P_1(j) g_{p_{j+1}} \left(  \frac{\tau_1(j) - a_{u(\theta^*)}(j)}{1-a_{u(\theta^*)}(j)} \right) + P_2(j) g_{p_{j+1}} \left( \frac{-\tau_2(j) - a_{u(\theta^*)}(j)}{1-a_{u(\theta^*)}(j)} \right)$.
\end{enumerate}

We will now bound $E[N_{\theta^*}(j+1)|\ba(j)]$ for all the three cases. We will use the fact that for all $0<a<1$, we have $\frac{\tau-a}{1-a} \leq \tau$ and $-\left( \frac{\tau+a}{1-a} \right) \leq -\tau $.

Starting with case (i), we have 
$\tau_2(j) - a_{u(\theta^*)}(j) = a_{u(\theta^*)}(j) - \tau_1(j)$. Thus, 

\begin{align*}
E[N_{\theta^*}(j+1)|\ba(j)] & = P_1(j) g_{p_{j+1}} \left(  \frac{\tau_1(j) - a_{u(\theta^*)}(j)}{1-a_{u(\theta^*)}(j)} \right) + P_2(j) g_{0} \left( \frac{a_{u(\theta^*)}(j) - \tau_1(j)}{1-a_{u(\theta^*)}(j)} \right), \\
& \leq P_1(j) g_{p_{j+1}} \left(  \frac{\tau_1(j) - a_{u(\theta^*)}(j)}{1-a_{u(\theta^*)}(j)} \right) + P_2(j) g_{p_{j+1}} \left( \frac{a_{u(\theta^*)}(j) - \tau_1(j)}{1-a_{u(\theta^*)}(j)} \right), \\
& = \frac{q_{j+1}}{2\beta} + \frac{p_{j+1}}{2\alpha} + \left( \frac{q_{j+1}}{2\beta} - \frac{p_{j+1}}{2\alpha} \right)(\beta - \alpha)\frac{\tau_1(j) - a_{u(\theta^*)}(j)}{1-a_{u(\theta^*)}(j)}\left( P_1(j) - P_2(j) \right),\\
& = \frac{q_{j+1}}{2\beta} + \frac{p_{j+1}}{2\alpha} + \left( \frac{q_{j+1}}{2\beta} - \frac{p_{j+1}}{2\alpha} \right)(\beta - \alpha)\frac{\tau_1(j) - a_{u(\theta^*)}(j)}{1-a_{u(\theta^*)}(j)}\frac{\tau_2(j) - \tau_1(j)}{\tau_2(j) + \tau_1(j)}, \\
& = \frac{q_{j+1}}{2\beta} + \frac{p_{j+1}}{2\alpha} + \left( \frac{q_{j+1}}{2\beta} - \frac{p_{j+1}}{2\alpha} \right)(\beta - \alpha)\frac{\tau_1(j) - a_{u(\theta^*)}(j)}{1-a_{u(\theta^*)}(j)}\frac{2a_{u(\theta^*)}(j) - 2\tau_1(j)}{\tau_2(j) + \tau_1(j)}, \\
& \leq \frac{q_{j+1}}{2\beta} + \frac{p_{j+1}}{2\alpha}.
\end{align*}

In case (ii),
\begin{align*}
E[N_{\theta^*}(j+1)|\ba(j)] & \leq   P_1(j) g_{0}(-\tau_1(j)) + P_2(j) g_0(\tau_2(j)),  \\
& = P_1(j) \left(\frac{1}{2\beta} - \frac{1}{2\beta}(\beta - \alpha)\tau_1(j)\right) + P_2(j)\left(\frac{1}{2\beta} + \frac{1}{2\beta}(\beta - \alpha)\tau_2(j)\right), \\
& = \frac{1}{2\beta} \ \left( \text{because $-P_1\tau_1(j) + P_2\tau_2(j) = 0 $} \right), \\
& \leq \frac{q_{j+1}}{2\beta} + \frac{p_{j+1}}{2\alpha} \ \left( \text{because } \alpha \leq \frac{1}{2} \right).
\end{align*}

In case (iii),
\begin{align*}
E[N_{\theta^*}(j+1)|\ba(j)] & \leq P_1(j) g_{p_{j+1}}(\tau_1(j)) + P_2(j) g_{p_{j+1}}(-\tau_2(j)), \\
& = \frac{q_{j+1}}{2\beta} + \frac{p_{j+1}}{2\alpha} + \left( \frac{q_{j+1}}{2\beta} + \frac{p_{j+1}}{2\alpha} \right) (\beta - \alpha)(P_1(j)\tau_1 - P_2(j)\tau_2), \\
& = \frac{q_{j+1}}{2\beta} + \frac{p_{j+1}}{2\alpha}.
\end{align*}

Thus, in all the cases $E[N_{\theta^*}(j+1)|\ba(j)] \leq \frac{q_{j+1}}{2\beta} + \frac{p_{j+1}}{2\alpha} \leq  \frac{Q_e}{2\beta} + \frac{P_e}{2\alpha}$.% where $p = \max_{j} p_{j+1}, q = \max_{j} q_{j+1}$.

Substituting this in the inequality for $E[M_{\theta^*}(T)]$, we get
$E[M_{\theta^*}(T)] =  M_{\theta^*}(0)\left\lbrace \frac{Q_e}{2\beta} + \frac{P_e}{2\alpha} \right\rbrace^T.$ Hence, 
\begin{align*}
P(|\theta_T - \theta^*| > \epsilon) & \leq M_{\theta^*}(0)\left\lbrace \frac{q}{2\beta} + \frac{p}{2\alpha} \right\rbrace^T ,\\
&  \leq \frac{1-p_{1}^{-1}\epsilon}{p_{1}^{-1}\epsilon} \left\lbrace \frac{q}{2\beta} + \frac{p}{2\alpha} \right\rbrace^T ,\\
& = \frac{p_1 - \epsilon}{\epsilon} \left\lbrace \frac{q}{2\beta} + \frac{p}{2\alpha} \right\rbrace^T. 
\end{align*}

By construction, $\hat{\theta}_{T} \geq \theta_T$. If $\hat{\theta}_{T} > \theta_T$, then $\theta^* > \hat{\theta}_{T} > \theta_T$. Thus, 
$$P(|\hat{\theta}_{T} - \theta^*| > \epsilon) \leq \frac{p_1 - \epsilon}{\epsilon} \left\lbrace \frac{q}{2\beta} + \frac{p}{2\alpha} \right\rbrace^T.$$
\qed
\endproof
%\end{proof}

\noindent\textbf{V: Time Complexity of \aheu \ }

\begin{lemma}
The number of iterations in \aheu \ is $\left\lceil \log \frac{p_1}{\epsilon - 2(\nu^2 + 2\nu)} \right \rceil$.
\end{lemma}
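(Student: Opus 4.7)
The plan is to track how fast the search interval $\ell_j := U_j - L_j$ shrinks across iterations, since the loop terminates the first time $\ell_j \leq \epsilon$. Initially $\ell_1 = p_1$. In each iteration of Algorithm~\ref{alg:aheu_eff_mod} one of three branches fires:
\begin{itemize}
\item Branch 1 ($\hat{K}_j > \bv\cdot\hat{\bz}^{\tilde S}/v_0$): $U_{j+1}=K_j$, so $\ell_{j+1} = \ell_j/2$.
\item Branch 2 ($K_j \leq \bv\cdot\hat{\bz}^{\tilde S}/v_0$): $L_{j+1}=K_j$, so $\ell_{j+1} = \ell_j/2$.
\item Branch 3 (the middle case): $L_{j+1} = \hat{K}_j$, so $\ell_{j+1} = U_j - \hat{K}_j$.
\end{itemize}
The first two give a clean halving, so the only branch needing real analysis is the third. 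A short algebraic manipulation gives $K_j - \hat{K}_j = (1-K_j)\bigl((1+\nu)^2-1\bigr) = (1-K_j)(2\nu+\nu^2)$, using the WLOG assumption $p_1 \leq 1$ from Section~\ref{subsec:aheu} (which forces $K_j \leq 1$ throughout and hence $\hat{K}_j \leq K_j$). Consequently
\[
\ell_{j+1} \;=\; (U_j - K_j) + (K_j - \hat{K}_j) \;=\; \tfrac{\ell_j}{2} + (1-K_j)(2\nu+\nu^2) \;\leq\; \tfrac{\ell_j}{2} + (2\nu+\nu^2),
\]
since $K_j \geq 0$. This single inequality covers all three branches simultaneously.

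With $c := 2\nu + \nu^2$, an easy induction on the recurrence $\ell_{j+1} \leq \ell_j/2 + c$ gives
\[
\ell_{j+1} \;\leq\; \frac{p_1}{2^{j}} + c\sum_{i=0}^{j-1} \frac{1}{2^i} \;\leq\; \frac{p_1}{2^{j}} + 2c.
\]
The while loop exits as soon as $\ell_{j+1} \leq \epsilon$, and it suffices to force $p_1/2^{j} \leq \epsilon - 2c$, i.e., $j \geq \log_2\!\bigl(p_1/(\epsilon - 2c)\bigr)$. Therefore the loop executes at most $\bigl\lceil \log \frac{p_1}{\epsilon - 2(\nu^2 + 2\nu)} \bigr\rceil$ times, which is the stated bound. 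Note this implicitly assumes $\epsilon > 2(\nu^2 + 2\nu)$, otherwise the bound is vacuous and the algorithm need not terminate in finitely many iterations; this is the natural regime for the approximation parameter $\nu$ to be useful.

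The main obstacle is conceptual rather than computational: the third branch does \emph{not} halve the interval, and a priori one might fear this prevents any clean geometric convergence. The key observation is that the ``slippage'' incurred on each invocation of branch 3 is bounded by the fixed constant $2\nu+\nu^2$ (independent of $j$), so summing a geometric series of these slippage terms still produces a bounded additive correction $2c$ to the halving rate. Once this uniform bound is in hand, the analysis collapses to a single scalar recurrence, and the exact form of the lemma's expression falls out by setting the residual error $2c$ into the denominator $\epsilon - 2c$.
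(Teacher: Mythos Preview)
Your proposal is correct and follows essentially the same approach as the paper: track the interval length $\ell_j$, observe that the worst (third) branch satisfies $\ell_{j+1}\le \ell_j/2 + (\nu^2+2\nu)$, telescope the resulting recurrence to $\ell_{j}\le p_1/2^{j-1}+2(\nu^2+2\nu)$, and solve for $j$. If anything, your write-up is slightly cleaner than the paper's, since you explicitly flag the necessary regime $\epsilon>2(\nu^2+2\nu)$ and carry the correct denominator $\epsilon-2(\nu^2+2\nu)$ through to the final expression.
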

%\textbf{Proof}
\proof{Proof.}
Let $I_j$ denote the size of the search interval in the $j$-th iteration of \aheu \ , i.e., $I_j = U_j - L_j$. As described there are three possible updates of the search interval. In the first two updates, $I_{j+1} = \frac{I_j}{2}$. In the third update rule, 
\begin{align*}
I_{j+1} &= U_j - \hat{K}, \\
&= \frac{I_j}{2} + (\nu^2 + 2\nu)(1-K_j), \\
& \leq \frac{I_j}{2} + (\nu^2 + 2\nu).
\end{align*}

Define $\hat{\nu} = \nu^2 + 2\nu$. Then, in all the three cases $I_{j+1} \leq \frac{I_j}{2} + \hat{\nu}$.
Similarly, 
\begin{align*}
I_{j} \  & \leq  \ \frac{I_{j-1}}{2} + \hat{\nu}, \\ 
I_{j-1} \ & \leq \ \frac{I_{j-2}}{2} + \hat{\nu},\\ 
\vdots \\
I_1 \ & \leq \ \frac{I_{0}}{2} + \hat{\nu}.
\end{align*}

where $I_0 = p_1$.

Taking a telescopic sum, we get
\begin{align*}
I_j \ & \leq \ \frac{I_{0}}{2^t} + \hat{\nu} \left( 1 + \frac{1}{2} + \cdots \frac{1}{2^{j-1}} \right), \\
& = \ \frac{I_{0}}{2^t} + 2\hat{\nu} \left( 1- \frac{1}{2^t} \right), \\
& \leq  \ \frac{I_{0}}{2^t} + 2\hat{\nu}.
\end{align*}

Thus, the number of iterations required to get the size of the search interval to within $\epsilon$ is $\left\lceil \log_2 \frac{p_1}{\hat{\nu}}\right\rceil$.

\endproof
\end{document}